\numberwithin{equation}{section}
\theoremstyle{definition}
\newtheorem{definition}{Definition}[section]
\theoremstyle{remark}
\theoremstyle{plain}
\newtheorem{theorem}[definition]{Theorem}
\newtheorem{result}[definition]{Result}
\newtheorem{lemma}[definition]{Lemma}
\newtheorem{example}[definition]{Example}
\newcommand{\eps}{\varepsilon}
\newcommand{\zt}{\zeta}
\newcommand\pd[3]{\frac{\partial^{{#3}}{#1}}{\partial{#2}}}
\newcommand{\laplc}{\triangle}
\newcommand{\bdy}{\partial}
\newcommand{\OM}{\Omega}
\newcommand{\D}{\mathbb{D}}
\newcommand{\ome}{\omega}
\newcommand\ball[1]{\mathbb{B}^{#1}}
\newcommand{\apprh}{\mathscr{A}_{\alpha,\,N}}
\newcommand{\loc}{\OM_F\cap\Delta}
\newcommand{\smoo}{\mathcal{C}}
\newcommand{\hol}{\mathcal{O}}
\newcommand\leb[1]{\mathbb{L}^{{#1}}}
\newcommand{\bcdot}{\boldsymbol{\cdot}}
\newcommand{\lrarw}{\longrightarrow}
\newcommand{\lamf}{\Lambda_f}
\newcommand\met[1]{ds^2_{{#1}}}
\newcommand{\jay}{{J}}
\newcommand\lint[2]{\int\limits_{{#1}}^{{#2}}}
\newcommand\geef[1]{G_f^{\raisebox{1pt}{$\scriptstyle\,{#1}$}}}
\newcommand{\func}{\phi_{t,\,\xi}}
\newcommand{\funk}{\varphi_{z,\,t,\,\xi}}
\newcommand{\zahl}{\mathbb{Z}}
\newcommand{\nat}{\mathbb{N}}
\newcommand{\CC}{\mathbb{C}^2}
\newcommand{\C}{\mathbb{C}} 
\newcommand{\R}{\mathbb{R}}
\newcommand{\re}{{\sf Re}}
\newcommand{\im}{{\sf Im}}
\begin{document}

\title[Growth of the Bergman metric]{On the growth of the Bergman metric \\
near a point of infinite type}

\author{Gautam Bharali}
\address{Department of Mathematics, Indian Institute of Science, Bangalore 560012, India}
\email{bharali@iisc.ac.in}

\begin{abstract}
We derive optimal estimates for the Bergman kernel and the Bergman metric for certain model
domains in $\CC$ near boundary points that are of infinite type. Being unbounded models, these
domains obey certain geometric constraints\,---\,some of them necessary for a non-trivial Bergman
space. However, these are \emph{mild} constraints: unlike most earlier works on this subject, we
are able to make estimates for \emph{non-convex} pseudoconvex models as well. In fact,
the domains we can analyse range from being mildly infinite-type to very flat at
infinite-type boundary points.
\end{abstract}

\keywords{Bergman--Fuchs formulas, Bergman kernel, Bergman metric, infinite type, optimal estimates}
\subjclass[2010]{Primary: 32A36; Secondary: 32A25, 32Q35}

\maketitle

\vspace{-0.2cm}
\section{Introduction}\label{S:intro}
Let $\OM\subset\CC$ be a pseudoconvex domain (not necessarily bounded) having a 
$\smoo^\infty$-smooth boundary.
Let $p\in\bdy\OM$ be a point of infinite type: i.e., for each $N\in\zahl_+$,
there exists a germ of a $1$-dimensional complex-analytic variety through $p$ whose order of 
contact with $\bdy\OM$ at $p$ is at least $N$. If $\bdy\OM$ is not Levi-flat around $p$, then
there exist local holomorphic coordinates $(z,w; U_p)$ centered at $p$ such that
\begin{equation}\label{E:local}
  \OM\cap U_p\,=\,\{(z,w)\in U_p:\im w > F(z)+R(z,\re w)\},
\end{equation}
where $F$ is a smooth, subharmonic, non-harmonic function defined in a neighbourhood of $z=0$
that vanishes to infinite order at $z=0$; $R(\bcdot \ ,0)\equiv 0$; and 
$R$ is $O(|z||\re w|,|\re w|^2)$. 
Given the infinite order of vanishing of $F$ at $z=0$, many of the
ideas for estimating the growth the Bergman kernel and its partial
derivatives\,---\,evaluated on the diagonal\,---\,as one approaches a finite-type boundary point are
no longer helpful. But some of the ideas alluded to can be useful (see, e.g., item~(2) below)
if the function $F$ introduced in \eqref{E:local} is the restriction of a global subharmonic, non-harmonic
function. Such a function gives us a model domain
\begin{equation}\label{E:mod}   
  \OM_F\,:=\,\{(z,w)\in\CC:\im w > F(z)\},
\end{equation}
which approximates $\bdy\OM$ to infinite order along the complex-tangential directions at $p$. This paper
studies the growth the Bergman kernel (evaluated on the diagonal) and the
Bergman metric on $\OM_F$ as one approaches $(0,0)\in \CC$, with certain reasonable conditions on
$F$ so that:
\begin{itemize}[leftmargin=22pt]
  \item the Bergman space for $\OM_F$\,---\,which we denote by $A^2(\OM_F)
  :=\leb{2}(\OM_F, \C)\cap\hol(\OM_F)$\,---\,is non-trivial; and
  \item the problem just described is tractable despite the difficulties arising from $F$ vanishing to infinite
  order at $z=0$.
\end{itemize}
\vspace{0.3mm}

The model domains defined by \eqref{E:mod} are reminiscent of the domains studied in
\cite{bharali:gBknitp11} but, in fact, we shall study a much wider class of model domains
than those introduced in \cite{bharali:gBknitp11}. To elaborate: the domains studied
in the latter paper satisfied a condition $(*)$\,---\,refer to
\cite[page~2]{bharali:gBknitp11}\,---\,which involved a technical growth condition that turns out to
be unnecessary. For the domains $\OM_F$ that we consider, in this paper $F$ will just be a radial
function. I.e., it will satisfy the condition
\begin{itemize}
  \item[$(\bullet)$] $F(z)=F(|z|)$ $\forall z\in\C$.
\end{itemize}
While the condition~$(\bullet)$ limits the sorts of domains of the form \eqref{E:mod}
that we wish to study, there are two reasons for restricting our attention to the case where
$F$ is radial:
\begin{enumerate}[leftmargin=25pt]
  \item[(1)] A recurring technique for obtaining the kind of estimates that we seek
  is the use of scaling: information on, say, the Bergman kernel at the unit scale is classical,
  while an understanding of $K_{\OM}(z,w)$ as
  $\OM\!\ni\!(z,w)\to (0,0)$ (where $(0,0)\in \bdy\OM$) is obtained by rescaling appropriately
  to unit scale: see,
  for instance, \cite{diedHerOhs:Bkuepd86} by Diederich \emph{et al.},
  \cite{nagelRosaySteinWainger:eBkSkcwpd88} and \cite{nagelRosaySteinWainger:eBSkC289}
  by Nagel \emph{et al.}, \cite{mcneal:bbBkfC289} by McNeal. These methods
  do not seem to yield optimal estimates, even just for model domains of the form 
  \eqref{E:mod}, if $F$ vanishes to infinite order at $z=0$ and 
  $F$ behaves differently along different \emph{real} directions in $\C$.
  The work of Kim--Lee \cite{kimLee:abBkaicitpd02}\,---\,who examine a class
  of \emph{convex} domains that form a proper subclass of the class of domains
  we shall study\,---\,suggests strongly that our problem is more tractable if
  $F$ is radial.
  \vspace{0.5mm}
  
  \item[(2)] Once we assume that $F$ is radial and $\bdy\OM_{F}$ is not Levi-flat, it
  follows that $F(z) > 0$ $\forall z\in\C\setminus\{0\}$: see part~$(a)$ of
  Theorem~\ref{T:kernel} below. Then (provided one has a localisation theorem for
  the Bergman kernel for $\OM_F$) the arguments of Boas \emph{et al.} in
  \cite{boasStraubeYu:blBkm95} imply that information on the growth of
  the Bergman kernel or the Bergman metric
  for $\OM_F$ yields analogous information for $\OM$ as one approaches $p$
  through
  $\OM\cap U_p$, where $U_p$ is as introduced
  by \eqref{E:local} and the pair $(\OM, p)$ satisfies the assumptions
  stated prior to \eqref{E:mod}.
\end{enumerate}
The function $K_{\OM}$ introduced above is defined as follows: if, for a domain
$\OM\subset \CC$, $B_{\OM} : \OM\times \OM\lrarw \C$ denotes the Bergman kernel
for $\OM$, then $K_{\OM}(z,w) := B_{\OM}\big((z,w), (z,w)\big)$. We will abbreviate
$K_{\OM_F}$ as $K_F$.
\smallskip

What enables us to so significantly weaken the condition $(*)$ in
\cite[page~2]{bharali:gBknitp11} to $(\bullet)$ above, and yet expect non-trivial results, is a
localisation principle for the Bergman kernel and the Bergman metric
by Chen \emph{et al.} \cite{chenKamimotoOhsawa:bBki03}: see Section~\ref{S:BergPrelim}
for details. 
\smallskip

With these ingredients, we get the \emph{optimal} expressions for the growth of the quantities
considered\,---\,as the inequalities \eqref{E:approach} and \eqref{E:approach_met} below show.
We briefly summarise where those inequalities hold:
\begin{enumerate}[leftmargin=25pt]
  \item[$(i)$] We get upper bounds on $K_F$ and on the Bergman metric for $\OM_F$
  that hold in a family of approach regions for $(0,0)\in \bdy\OM_F$ comprising regions
  with \emph{arbitrarily high} orders of contact with $\bdy\OM_F$ at $(0,0)$, our bounds being
  independent of the approach region.
  \vspace{0.5mm}
  
  \item[$(ii)$] There exists an $\overline{\OM}_F$-open neighbourhood $\ome$ of
  $(0,0)$ such that our lower bound for $K_F$ holds true
  on $\ome\cap\OM_F$. 
\end{enumerate}
It is well known that, \emph{even if} $F$ is radial, $K_F(z,w)\gtrsim\|(z,w)\|^{-2}$ is the best
that one expects (for non-tangential approach) without any additional information on $F$. For
instance: with the additional information that $(0,0)\in\bdy\OM_F$ is of finite type, we get
optimal estimates because, in this case, we can find constants $C, r>0$, and $M\in\zahl_+$
such that
\begin{align*}
  \ball{2}(0;r)\cap\{(z,w):\im{w}>C|z|^{2M}\}
  &\subset \OM_F\cap \ball{2}(0;r) \notag \\
  &\subset \ball{2}(0;r)\cap\{(z,w):\im{w}>(1/C)|z|^{2M}\}.
\end{align*}
Here, we can make precise estimates by exploiting the simplicity of the prototypal
function $z\longmapsto |z|^{2M}$. When $F$ vanishes to infinite
order at $0$, there is no obvious notion of a prototype for $F$. However, $F$ exhibits,
in some sense, a ``controlled infinite-order vanishing'' at $0$ if it satisfies the condition
stated right after part~$(a)$ of Theorem~\ref{T:kernel}. This condition is motivated 
by the fact that it encompasses a very large class of
domains, ranging from the ``mildly infinite-type'' to the very flat at $(0,0)$: see
the examples in Section~\ref{S:examples}. To state this condition, we need the following:

\begin{definition}\label{D:doubling}
An increasing function $g: [0,R]\lrarw \R$ is said to satisfy a \emph{doubling condition} if
$g(0) = 0$ and there exists a constant $\sigma > 1$ such that
\[
  2g(x)\,\leq\,g(\sigma x) \; \; \forall x\in [0, R/\sigma].
\]
We will call the constant $\sigma>1$ a \emph{doubling constant} for $g$.
\end{definition}

We shall also need the following notation. Let 
$f:[0,\infty)\lrarw\R$ be a strictly increasing function and let $f(0)=0$. We
define the function $\Lambda_f$ as
\[
\Lambda_f(x) \ := \ \begin{cases}
			-1/\log(f(x)), &\text{if $0<x<f^{-1}(1)$}, \\
			0, &\text{if $x=0$}.
			\end{cases}
\]

\begin{theorem}\label{T:kernel}
Let $F$ be a $\smoo^\infty$-smooth subharmonic function on $\C$ that vanishes
to infinite order at $0\in\C$ and is radial (i.e., satisfies condition~$(\bullet)$ above). Suppose
the boundary of the domain $\OM_F:=\{(z,w)\in\CC:\im w > F(z)\}$ is not Levi-flat around 
$(0,0)$.
\begin{enumerate}[leftmargin=25pt]
  \item[$(a)$] Let $f$ be given by the relation $f(|z|)=F(z)$
  $\forall z\in \C$. Then, $f$ is a strictly
  increasing function on $[0,\infty)$ and $\lim_{r\to \infty}f(r) = +\infty$.
\end{enumerate}

Assume that there exists  a constant $R\in \big(0, f^{-1}(1)\big)$
such that $\lamf|_{[0, R]}$ satisfies a doubling condition. Then:
\begin{enumerate}[leftmargin=25pt]
  \item[$(b)$] There exists a constant $C_1 > 0$ and, for each $\alpha>0$ and
  $N\in\zahl_+$, there exists a constant $r(\alpha, N)>0$ such that
  \begin{align}\label{E:approach}
    (1/C_1)(\im w)^{-2}\big(f^{-1}(\im w)\big)^{-2}\leq\,K_F&(z,w)\,\leq\,C_1(\im w)^{-2}
    \big(f^{-1}(\im w)\big)^{-2} \\
    &\forall (z,w)\in\apprh\cap\{(z,w): \im{w}<r(\alpha, N)\}, \notag
  \end{align}
  where $\apprh$ denotes the approach region
  \[
    \apprh\,:=\,\big\{(z,w)\in\OM_F:\sqrt{|z|^2+|\re w|^2}<\alpha(\im w)^{1/N}\big\}.
  \]
  \item[$(c)$] Furthermore, there exists a constant $r_0>0$ (independent of all the parameters
  above) such that the lower bound in \eqref{E:approach}
  holds for all $(z,w)\in \OM_F\cap\{(z,w):\im w < r_0\}$.
\end{enumerate} 
\end{theorem}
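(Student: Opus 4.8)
The plan is to reduce everything to a single one-variable estimate and then leverage the scaling structure of $\OM_F$. First I would establish part~$(a)$: since $F$ is subharmonic, radial, and $\bdy\OM_F$ is not Levi-flat near $(0,0)$, the function $f$ with $f(|z|)=F(z)$ cannot be constant on any interval (a flat piece would force $F$ harmonic there, hence $\bdy\OM_F$ Levi-flat); combined with subharmonicity of the radial function $r\mapsto f(r)$ — whose mean-value characterization forces $r\mapsto f(e^s)$ to be convex and nondecreasing in $s$ — one gets that $f$ is strictly increasing. That $f\to+\infty$ follows because a bounded subharmonic radial function on $\C$ would be constant (Liouville-type), contradicting non-Levi-flatness; in particular $f^{-1}$ is well-defined on $[0,\infty)$ and $F(z)>0$ for $z\neq 0$, which is the positivity asserted in item~(2) of the introduction.

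For parts~$(b)$ and $(c)$, the central tool is the localisation principle of Chen--Kamimoto--Ohsawa cited in Section~\ref{S:BergPrelim}, which lets me compare $K_F$ near $(0,0)$ with the Bergman kernel of the ``pure'' model, i.e. reduces the problem to estimating $B_{\OM_F}$ on a neighbourhood-basis of $(0,0)$ where $F$ is replaced by its behaviour on $[0,R]$. After this localisation, I would exploit the translation invariance of $\OM_F$ in $\re w$ together with its $S^1$-symmetry in $z$ (from radiality) to pass to the partial Fourier/Laurent expansion: writing elements of $A^2(\OM_F)$ as series $\sum_{k,m} c_{k,m}z^k e^{i m w}$ with $m>0$, the Bergman kernel on the diagonal becomes (by the Bergman--Fuchs-type formula this paper is organised around) an explicit sum
\[
  K_F(z,w)\;=\;\sum_{k\geq 0}\;\frac{|z|^{2k}}{\pi\displaystyle\int_0^\infty\!\! t^{-1}e^{-2t\,\im w}\Big(\!\int_{\{F<\,t^{-1}\}}|\zeta|^{2k}\,dA(\zeta)\Big)dt}\,,
\]
or a variant thereof; the weight integral $\int_{\{F(\zeta)<s\}}|\zeta|^{2k}dA(\zeta)$ is, by radiality, controlled by powers of $f^{-1}(s)$.

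The heart of the argument is then a Laplace-type asymptotic analysis of that sum as $\im w\to 0^+$. Here the doubling condition on $\Lambda_f|_{[0,R]}$ is exactly what tames the infinite-order vanishing: it guarantees that $f^{-1}(s)$ — equivalently the radius of the sublevel set $\{F<s\}$ — varies in a $\log$-regular, ``almost self-similar'' way, so that the dominant term in the $k$-sum comes from $k\asymp$ (a controlled quantity) and the sum is comparable to its largest term up to constants; this yields $K_F(z,w)\asymp (\im w)^{-2}\big(f^{-1}(\im w)\big)^{-2}$. I expect this — making the doubling hypothesis interact cleanly with both the $t$-integral and the $k$-summation to produce matching two-sided bounds with constants independent of $(z,w)$ — to be the main obstacle; the convexity of $s\mapsto\log f^{-1}(e^{-s})$-type quantities implied by doubling is what I would lean on. Finally, for part~$(b)$ the restriction to the approach region $\apprh$ enters only in the \emph{upper} bound: one needs $\|(z,\re w)\|$ small relative to a power of $\im w$ so that, after rescaling to unit scale, the point stays in a fixed compact part of the rescaled domain and the classical unit-scale kernel estimate applies uniformly; the lower bound requires no such restriction, which is precisely why part~$(c)$ holds on a full one-sided neighbourhood $\{\im w<r_0\}$ with $r_0$ absolute — it comes from testing the extremal problem against a single explicit $L^2$ holomorphic function (a suitably normalised monomial times $e^{iw}$-type factor) whose norm is estimated directly via the weight integrals above.
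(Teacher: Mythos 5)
Your part~$(a)$ is essentially the paper's argument (maximum principle for the radial subharmonic $F$, then Liouville for subharmonic functions), and you correctly identify the two structural pillars of the lower bound: the Chen--Kamimoto--Ohsawa localisation and a Bergman--Fuchs extremal-function argument, with the approach region playing no role in part~$(c)$. But for parts~$(b)$ and~$(c)$ the proposal has concrete gaps. The step you yourself flag as ``the main obstacle''\,---\,showing that the Fourier--Laurent series for $K_F$ is comparable to its dominant term, uniformly as $\im w\to 0^+$, using only the doubling condition on $\lamf$\,---\,is the entire analytic content of the theorem and is not carried out; deferring it leaves no proof. Moreover, your justification of the upper bound (``after rescaling to unit scale, the point stays in a fixed compact part of the rescaled domain and the classical unit-scale kernel estimate applies'') is precisely the strategy that the infinite-type setting forecloses: there is no anisotropic dilation carrying $\OM_F$ to a fixed domain at unit scale when $F$ vanishes to infinite order, which is the point of Example~\ref{Ex:very_flat}. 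The paper's upper bound instead inscribes the bidisc $\triangle(z,t)=D\big(z,\tfrac{c}{2}f^{-1}(t)\big)\times D(it,t/2)$ in $\OM_F$\,---\,the doubling condition yields $f^{-1}(t)\leq (1+C')f^{-1}(t/2)$ and the approach-region hypothesis yields $|z|\ll f^{-1}(t)$, so the bidisc fits\,---\,and then simply uses monotonicity, $K_F(z,it)\leq K_{\triangle(z,t)}(z,it)=1/{\rm vol}(\triangle(z,t))$.

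There is also a quantitative error in your candidate extremal function for the lower bound. A ``monomial times $e^{iw}$-type factor'' does not decay in $\re w$: on the full $\OM_F$ such a function is not square-integrable (the $\re w$-integral diverges), and on the localised domain $\loc$ the function $e^{iw/t}$ has $\|\cdot\|^2_{\leb{2}(\loc)}\asymp t\,(f^{-1}(t))^2$ (the $\re w$-integral contributes only $O(1)$), so \eqref{E:B-F_kern} certifies only $K\gtrsim t^{-1}(f^{-1}(t))^{-2}$\,---\,short of the required bound by a factor of $t^{-1}$. One needs a holomorphic test function that also localises in $\re w$ at scale $t$, such as the paper's $-4t^2/(w+it)^2$, whose norm is then estimated by splitting the radial integral at $f^{-1}(t)$ and $f^{-1}(\sqrt{t})$ and invoking the doubling condition (Lemmas~\ref{L:diff_of_squares} and~\ref{L:L_2_norm}); this is where the doubling hypothesis actually enters the lower bound, rather than through a summation-by-largest-term argument.
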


A further piece of notation: we shall
abbreviate $\met{\OM_F}(p;\,\xi, \xi)$\,---\,i.e., the Bergman metric for $\OM_F$
at $(p, \xi)$, which gives the \textbf{square} of the Bergman norm of
$\xi\in T_p^{1,0}\OM_F$\,---\,as $\met{F}(p;\,\xi)$. Our next theorem provides
estimates for the Bergman metric of $\OM_F$ as one approaches
$(0,0)\in \OM_F$.

\begin{theorem}\label{T:metric}
Let $\OM_F$ be the domain in $\CC$ described by Theorem~\ref{T:kernel}. Identify
$T^{1,0}\OM_F$ with $\OM_F\times\CC$ via the identification
$\xi = \xi_1\big(\!\left.\partial/\partial z\right|_p\big) + 
\xi_2\big(\!\left.\partial/\partial w\right|_p\big)\!\leftrightarrow\!(p;\,\xi_1,\xi_2)\in \OM_F\times\CC$.
Then, there exists a constant $C_2 > 0$ and, for each $\alpha>0$ and
$N\in\zahl_+$, there exists a constant $\tau(\alpha, N)>0$ such that
\begin{align}
  (1/C_2)\big(\big(f^{-1}(\im w)\big)^{-2}|\xi_1|^2
  + |\im w|^{-2}|\xi_2|^2\big)\,&\leq\,\met{F}(z,w;\,\xi) \notag \\
    &\leq\,C_2\big(\big(f^{-1}(\im w)\big)^{-2}|\xi_1|^2
    + |\im w|^{-2}|\xi_2|^2\big) \label{E:approach_met}\\
    \forall (z,w;\,\xi) &\in \big(\apprh\cap\{(z,w): \im{w}<\tau(\alpha, N)\}\big)\times\CC, \notag
\end{align}
where $\apprh$ is the approach region introduced in Theorem~\ref{T:kernel}.
\end{theorem}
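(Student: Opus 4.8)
The plan is to derive Theorem~\ref{T:metric} from the kernel bounds of Theorem~\ref{T:kernel} via the classical extremal description of the Bergman metric: for a domain $D\subset\CC$, a point $q\in D$ and a tangent vector $\eta=(\eta_1,\eta_2)$ (in the coordinates of the theorem),
\[
  \met{D}(q;\,\eta)\ =\ \frac{1}{K_D(q)}\,
  \sup\Big\{\big|\eta_1\,(\bdy_z g)(q)+\eta_2\,(\bdy_w g)(q)\big|^2
  \ :\ g\in A^2(D),\ \|g\|_{\leb{2}(D)}\le 1,\ g(q)=0\Big\}.
\]
Write $\delta:=\im w$. For $p=(z,w)\in\apprh$ with $\delta$ small I shall repeatedly use three consequences of the hypotheses on $F$ and of part~$(a)$ of Theorem~\ref{T:kernel} (which makes $f^{-1}$ well defined and records that $f$ vanishes to infinite order): the vertical clearance satisfies $\delta-F(z)\ge\tfrac12\delta$; one has $|z|=o\big(f^{-1}(\delta)\big)$; and the doubling condition on $\lamf$ forces $f^{-1}(t\delta)\asymp f^{-1}(\delta)$ for $t$ ranging over any fixed compact subset of $(0,\infty)$, with constants depending only on the doubling constant $\sigma$. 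Together these let me fix $c=c(\sigma)>0$ so that the polydisc $\Delta(p):=\{|z'-z|<c\,f^{-1}(\delta)\}\times\{|w'-w|<c\,\delta\}$ lies in $\OM_F$, and so that, in addition, a polydisc of comparable polyradius centred at any point of $\Delta(p)$ also lies in $\OM_F$.

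For the upper estimate, take $g\in A^2(\OM_F)$ with $\|g\|\le1$ and $g(p)=0$. The sub-mean-value inequality over the inner polydisc around an arbitrary $q\in\Delta(p)$ gives $|g(q)|^2\le K_F(q)\le C(\sigma)\,\delta^{-2}\big(f^{-1}(\delta)\big)^{-2}$, hence $\sup_{\Delta(p)}|g|\le C(\sigma)^{1/2}\delta^{-1}\big(f^{-1}(\delta)\big)^{-1}$; the Cauchy inequalities on $\Delta(p)$ then bound $|(\bdy_z g)(p)|$ by $\big(c\,f^{-1}(\delta)\big)^{-1}\sup_{\Delta(p)}|g|$ and $|(\bdy_w g)(p)|$ by $\big(c\,\delta\big)^{-1}\sup_{\Delta(p)}|g|$. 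Inserting this into the extremal formula bounds $\met{F}(z,w;\,\xi)$ by a constant depending only on $\sigma$ times $\big(|\xi_1|^2\big(f^{-1}(\delta)\big)^{-2}+|\xi_2|^2\delta^{-2}\big)\,\delta^{-2}\big(f^{-1}(\delta)\big)^{-2}/K_F(z,w)$, and the lower bound of Theorem~\ref{T:kernel}$(b)$ now yields the upper half of \eqref{E:approach_met} with a constant depending only on $\sigma$ and $C_1$.

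For the lower estimate I rescale. Put $\Psi_p(z,w):=\big((z-z_0)/f^{-1}(\delta),\,(w-\re w_0)/\delta\big)$ for $p=(z_0,w_0)$; this is a biholomorphism of $\OM_F$ onto $\OM_F^{(p)}:=\{(z,w)\in\CC:\im w>\delta^{-1}f(|z_0+f^{-1}(\delta)\,z|)\}$ carrying $p$ to $(0,i)$, and $\met{F}(z_0,w_0;\,\xi)=\met{\OM_F^{(p)}}\big((0,i);\,(\xi_1/f^{-1}(\delta),\,\xi_2/\delta)\big)$. Since $\delta^{-1}F(z_0)\to0$ on $\apprh$ and, again by the doubling condition, the slices of $\OM_F^{(p)}$ at heights near $\im w=1$ remain uniformly bounded and non-degenerate, one checks that a polydisc $P_0$ of fixed polyradius $\rho_0=\rho_0(\sigma)>0$ about $(0,i)$ lies in $\OM_F^{(p)}$ for all small $\delta$ and all admissible $p$. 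As $\OM_F^{(p)}$ is pseudoconvex, one solves, for each $\eta$, a $\bar\bdy$-equation on $\OM_F^{(p)}$ with weight $e^{-\varphi}$, where $\varphi$ is a sufficiently large multiple of $\log|(z,w)-(0,i)|$ near $(0,i)$\,---\,harmless, since the relevant $\bar\bdy$-data lives in $P_0\setminus\{(0,i)\}$\,---\,plus a fixed bounded plurisubharmonic weight whose complex Hessian is $\gtrsim 1$ on the support of that data; this produces $g\in A^2(\OM_F^{(p)})$ with $g((0,i))=0$, with $(\bdy_zg)((0,i))=\overline{\eta_1}$ and $(\bdy_wg)((0,i))=\overline{\eta_2}$, and with $\|g\|_{\leb{2}(\OM_F^{(p)})}$ bounded by a constant depending on $\rho_0$ times $|\eta|$. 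Feeding this into the extremal formula gives $\met{\OM_F^{(p)}}\big((0,i);\,\eta\big)\gtrsim|\eta|^2$ uniformly; combining this with the upper bound of Theorem~\ref{T:kernel}$(b)$ (equivalently, $K_{\OM_F^{(p)}}((0,i))=\big(f^{-1}(\delta)\big)^2\delta^2K_F(z_0,w_0)\lesssim1$) and transporting back through $\Psi_p$ yields the lower half of \eqref{E:approach_met}; the implied constant depends only on $\sigma$ and $C_1$. (No mixed term appears because the quadratic form produced this way is already of the stated diagonal shape\,---\,equivalently, the rotation $z\mapsto\cis{\tht}z$ is an automorphism of $\OM_F^{(p)}$ fixing $(0,i)$, which forces the off-diagonal metric coefficient there to vanish.)

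All dependence on $\alpha$ and $N$ enters only through the smallness thresholds required for the three facts above and for Theorem~\ref{T:kernel}$(b)$, and is therefore absorbed into $\tau(\alpha,N)$, while $C_2$ depends only on $\sigma$ and $C_1$. The routine parts are the Cauchy estimates and the bookkeeping of thresholds; the one step needing genuine care is the uniform $\leb{2}$-solvability of $\bar\bdy$ over the family $\{\OM_F^{(p)}\}$\,---\,i.e.\ controlling the norm of the corrector independently of $\delta$ and $p$\,---\,which is precisely where pseudoconvexity of $\OM_F$ and the doubling-driven uniform size of $P_0$ are essential.
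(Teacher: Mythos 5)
Your upper-bound argument is sound and is essentially the paper's: the sub-mean-value inequality on an inner bidisc plus Cauchy estimates on the anisotropic bidisc $D\big(z,\tfrac{c}{2}f^{-1}(\im w)\big)\times D\big(i\im w,\tfrac{\im w}{2}\big)$, divided by the kernel lower bound, is just a repackaging of the paper's use of the monotonicity of $\jay_{\OM}$ together with the exact Bergman metric of that bidisc. The lower bound is where your route diverges, and it contains the real gap. You rescale to the unbounded domains $\OM_F^{(p)}$ and propose to solve $\bar\bdy$ with a weight consisting of a logarithmic singularity at $(0,i)$ plus ``a fixed bounded plurisubharmonic weight whose complex Hessian is $\gtrsim 1$ on the support of the data.'' For the corrector to land in the \emph{unweighted} $\leb{2}$ of the unbounded domain $\OM_F^{(p)}$, that auxiliary weight must be globally bounded and globally plurisubharmonic on $\OM_F^{(p)}$, with Hessian bounded below by a fixed constant on a fixed neighbourhood of $(0,i)$, uniformly in $p$ and $\delta$. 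No such function can depend on $z$ alone (a bounded subharmonic function on $\C$ is constant), so it must be built out of the global shape of $\OM_F^{(p)}$; undoing the scaling, what you need is a bounded plurisubharmonic function on $\OM_F$ whose Hessian at $p$ dominates $c\big(f^{-1}(\delta)^{-2}|dz|^2+\delta^{-2}|dw|^2\big)$ with $c$ and the sup-norm independent of $\delta$. That is, quantitatively, the heart of the theorem: it is a scaled form of the Chen--Kamimoto--Ohsawa localisation combined with the kernel estimates, and asserting it is close to assuming what is to be proved. The paper sidesteps this entirely: it invokes Result~\ref{R:key} once, for the \emph{fixed} domain $\loc$, and then exhibits explicit rational candidates ($\func$ and $\funk$) for the extremal problem defining $\jay_{\loc}$, whose $\leb{2}$-norms are estimated by hand in Lemmas~\ref{L:L_2_norm} and~\ref{L:L_2_norm_alt}. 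Those integral estimates\,---\,where the doubling condition on $\lamf$ actually does its work\,---\,are the technical core, and your proposal never engages with them.

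Two further points. First, the paper explicitly warns (in the discussion preceding Example~\ref{Ex:very_flat}) that scaling arguments require structural information on $\lamf$ near $0$ that is unavailable for the very flat examples covered by the theorem; your scaling family $\{\OM_F^{(p)}\}$ has uniformly non-degenerate slices near $\im w=1$ (this part of your claim is fine, by doubling), but uniform control of the $\bar\bdy$-problem over the whole family is exactly the kind of statement this warning is about. Second, your parenthetical claim that $z\mapsto\cis{\tht}z$ is an automorphism of $\OM_F^{(p)}$ fixing $(0,i)$ is false for $z_0\neq 0$: the rescaled boundary function is $\delta^{-1}f(|z_0+f^{-1}(\delta)z|)$, which is radial about $-z_0/f^{-1}(\delta)$, not about $0$. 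This slip is harmless (the bound $\gtrsim|\eta|^2$ is already diagonal), but it should be removed.
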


We emphasise: what makes optimal estimates in the infinite-type case\,---\,even with the
simplifying assumption $(\bullet)$\,---\,challenging is that there is no obvious prototype
that describes the behaviour of the function $F$ at $0\in \C$. In the finite-type
case, the ``right'' prototype for the $F$ in \eqref{E:local}
(and how this prototype changes as the point $p$ varies) is dictated by Taylor's theorem:
this is the basis of the diverse estimates derived in the papers cited above. In
contrast, due to the challenge just mentioned, there are very few works in the
infinite-type case: see, for instance, \cite{kimLee:abBkaicitpd02, bharali:gBknitp11,
lee:aeBktdit13}. The set-up in \cite{kimLee:abBkaicitpd02, bharali:gBknitp11} is the
closest to that of Theorems~\ref{T:kernel} and~\ref{T:metric}. Theorem~\ref{T:kernel}
subsumes the main result in \cite{bharali:gBknitp11}. This is because (along with the features
of $F$ already discussed) our doubling condition on $\lamf$ is more permissive than the
control on $\lamf$ required in \cite{bharali:gBknitp11} (see Section~\ref{SS:subsumes}
for details). In \cite{kimLee:abBkaicitpd02}, the domains
$\OM_F$ are required to satisfy the following conditions (with $f$, as in
Theorem~\ref{T:kernel}, such that $f(|z|) = F(z)$ $\forall z\in \C$): 
\begin{itemize}[leftmargin=22pt]
  \item $f^{\prime\prime}(x) > 0$ $\forall x > 0$; and
  \item $\lamf$ extends smoothly to $x=0$ and vanishes to finite order at $0$.
\end{itemize}
The second condition does not allow Kim--Lee to study in \cite{kimLee:abBkaicitpd02}
such $\OM_F$ that are either ``mildly infinite-type'' or very flat at $(0,0)$. The conditions stated in
Theorem~\ref{T:kernel} \emph{do} allow us to analyse $\OM_F$ of the latter kind: an assertion that
will be clearer through the examples in Section~\ref{S:examples}.
\smallskip

Let us recall what is meant by vanishing to infinite order at $0$. In the context
of the domains $\OM_F$, we mean that the function $f$ is of class $\smoo^\infty([0,\infty))$,
and $f^{(n)}(0) = 0$, $\lim_{x\to 0^+}f(x)/x^n = 0$ for every $n\in \nat$.
\smallskip

A few analytic and geometric preliminaries are needed before the proofs of our main theorems
can be given. It might be helpful to get a sense of the key ideas of our proof. A discussion of our
method, plus the role of the localisation principle in \cite{chenKamimotoOhsawa:bBki03}
mentioned above, are presented in Section~\ref{S:BergPrelim}. Section~\ref{S:analLemmas}
is devoted to essential quantitative lemmas. The proofs of the main results will be presented
in Sections~\ref{S:proof_kernel} and~\ref{S:proof_metric}.
\smallskip

\section{Examples}\label{S:examples}
This section is devoted to presenting examples of domains of the form $\OM_F$ that
satisfy the conditions of Theorems~\ref{T:kernel} and~\ref{T:metric}. They are
such that the point $(0,0)\in \bdy\OM_F$ is a point of infinite type, but
$\bdy\OM_F$ will\,---\,as we shall see\,---\,be flat to varying degrees in these examples.
\smallskip

Let $F$ and $f$ be as in Theorem~\ref{T:kernel}. Since $F$ is assumed to
be radial and subharmonic, it is useful to recall the expression for the Laplacian on $\C$
in polar coordinates:
\[
  \laplc\,:=\,\pd{{}}{r^2}{2} + \frac{1}{r}\pd{{}}{r}{{}}
  		+ \frac{1}{r^2}\pd{{}}{\theta^2}{2},
\]
where we write $z = re^{i\theta}$. In view of the assumption $F(re^{i\theta}) = f(r)$
$\forall r>0$ and $\forall\theta\in \R$, we immediately have the following:

\begin{lemma}\label{L:radial_subh}
Let $F: \C\lrarw \R$ be a radial function and let $f: [0, \infty)\lrarw \R$ be such that $F(re^{i\theta}) = f(r)$
$\forall r\geq0$ and $\forall\theta\in \R$, where $f\in \smoo^2\big([0, \infty)\big)$. Suppose
$f^{(n)}(x) = o(x^{2-n})$ as $x\to 0^+$ for $n = 1, 2$. Furthermore, if
\[
  f^{\prime\prime}(x) + x^{-1}f^\prime(x)\,\geq\,0 \; \; \forall x > 0,
\]
then $F$ is subharmonic on $\C$.
\end{lemma}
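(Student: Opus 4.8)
\medskip

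The plan is to check the sub-mean value inequality for $F$ at each point of $\C$, splitting into the easy case $z\neq 0$ and the delicate case $z=0$.

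Away from the origin, $z\longmapsto |z|$ is $\smoo^\infty$, so $F$ inherits the $\smoo^2$ regularity of $f$ there; writing $z=re^{i\theta}$, using the polar expression for $\laplc$ recorded just above and $\partial F/\partial\theta\equiv 0$, I would compute $\laplc F(z)=f''(|z|)+|z|^{-1}f'(|z|)\geq 0$, so $F$ is subharmonic, hence satisfies the sub-mean value inequality (for all sufficiently small radii) at every $z\neq 0$.

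For $z=0$ the key observation is that the pointwise inequality $f''+x^{-1}f'\geq 0$ forces $f$ to be nondecreasing. Indeed, multiplying by $x>0$ gives $(xf'(x))'=x\big(f''(x)+x^{-1}f'(x)\big)\geq 0$, so $x\mapsto xf'(x)$ is nondecreasing on $(0,\infty)$; since the hypothesis $f'(x)=o(x)$ (in fact $f'(x)\to 0$ suffices) yields $xf'(x)\to 0$ as $x\to 0^+$, we get $xf'(x)\geq 0$ and hence $f'(x)\geq 0$ for all $x>0$. The conditions $f^{(n)}(x)=o(x^{2-n})$ also make $f$ continuous at $0$ (indeed they make $F$ of class $\smoo^2$ across $0$, with all first- and second-order partials vanishing there, although subharmonicity will not need this), so $F$ is continuous, hence upper semicontinuous, on all of $\C$. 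Then for every $\rho>0$,
\[
  \frac{1}{2\pi}\int_0^{2\pi}F(\rho e^{i\theta})\,d\theta=f(\rho)\geq f(0)=F(0),
\]
which is exactly the sub-mean value inequality at $0$. Combining the two cases via the local sub-mean value characterisation of subharmonic functions gives that $F$ is subharmonic on $\C$. Alternatively, having shown $F$ subharmonic on $\C\setminus\{0\}$ and continuous\,---\,so bounded above near $0$\,---\,one may instead invoke the removable-singularity theorem for subharmonic functions, noting that the upper-semicontinuous regularisation of $F$ at $0$ equals $F(0)$.

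The only real obstacle is the origin: off it the statement is a one-line Laplacian computation, while near $0$ the work is to convert the differential inequality into monotonicity of $f$, with the growth hypotheses on $f'$ and $f''$ serving only to control the boundary term $xf'(x)$ as $x\to 0^+$ and to ensure the regularity of $F$ at $0$.
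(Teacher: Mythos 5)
Your proof is correct. Note that the paper offers no written proof of this lemma at all: it is stated as ``immediate'' from the polar-coordinate formula for $\laplc$ displayed just above it, the implicit argument being that $\laplc F(z)=f''(|z|)+|z|^{-1}f'(|z|)\geq 0$ for $z\neq 0$, while the hypotheses $f^{(n)}(x)=o(x^{2-n})$, $n=1,2$, are exactly what is needed to check that $F$ is of class $\smoo^2$ across the origin with all second-order partials vanishing there, so that $\laplc F\geq 0$ on all of $\C$ and the classical $\smoo^2$ criterion applies. You take the same route away from the origin but handle the origin differently: rather than verifying regularity of $F$ at $0$, you integrate the differential inequality in the form $(xf'(x))'\geq 0$, use $xf'(x)\to 0$ (which already follows from continuity of $f'$ at $0$) to conclude $f'\geq 0$, and hence obtain the sub-mean-value inequality $\frac{1}{2\pi}\int_0^{2\pi}F(\rho e^{i\theta})\,d\theta=f(\rho)\geq f(0)=F(0)$ directly. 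This is a valid and somewhat more robust argument\,---\,it needs only continuity of $F$ at $0$ plus the standard fact that an upper semicontinuous function satisfying the local sub-mean-value property is subharmonic (or, as you note, the removable-singularity theorem for subharmonic functions bounded above near an isolated point). The trade-off is that your route leans on these classical characterisations, whereas the intended argument is a pure $\smoo^2$-Laplacian check; both are sound, and your parenthetical observation that the growth hypotheses do in fact give $\smoo^2$-regularity at $0$ shows you could have closed the argument either way.
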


Our first example features the familiar functions $f(x) = e^{-1/x^p}$, $x>0$ (where $p>0$),
which vanish to infinite order at $x=0$.

\begin{example}\label{Ex:mildly_inf_type}
A class of domains $\OM_F$ satisfying the conditions of
Theorems~\ref{T:kernel} and~\ref{T:metric}, which includes domains for which $\bdy\OM_F$ is mildly
infinite-type at $(0,0)$.
\end{example}

\noindent{Consider the function $f : [0,\infty)\lrarw [0, \infty)$ described by the following
conditions:
\begin{enumerate}[leftmargin=25pt]
  \item[$(a)$] Fixing a constant $p>0$,
  \[
    f(x)\,:=\,\begin{cases}
    			e^{-1/x^p}, &\text{if $0<x<1/2$},\\
    			0, &\text{if $x=0$}.
    			\end{cases}
  \]
  \item[$(b)$] $f|_{[1/2, \infty)}$ is so defined that $f|_{(0, \infty)}$ is of class 
  $\smoo^\infty$ and strictly increasing, the function $F: \C\lrarw [0, \infty)$ given by $F(z) := f(|z|)$
  is subharmonic on $\C$, and $\lim_{x\to \infty}f(x) = +\infty$.
\end{enumerate}
We shall soon see why it is possible to satisfy all of the conditions listed in $(b)$. But first:
notice that if $p\not\in \zahl_+$, then $\lamf$ does not extend smoothly to $x=0$\,---\,which
places Example~\ref{Ex:mildly_inf_type} outside the realm considered by
Kim--Lee in \cite{kimLee:abBkaicitpd02}. We shall see that the conditions of
Theorems~\ref{T:kernel} and~\ref{T:metric} are satisfied for $p$ arbitrarily close to $0$.
With $f$ as above, when $p\ll 1$ we say that
$\bdy\OM_F$ is \emph{mildly infinite-type at $(0,0)$}.
\smallskip

Let us write
\[
  \phi_p(x)\,:=\,e^{-1/x^p} \; \; \forall x\in (0,1).
\]
We compute:
\begin{align*}
  \phi_p^\prime(x)\,&=\,px^{-(p+1)}e^{-1/x^p}\,\big(>\,0
  					\; \; \forall x\in (0,1)\big),\\
  \phi_p^{\prime\prime}(x)\,&=\,e^{-1/x^p}\bcdot
  \big(p^2x^{-2(p+1)} - p(p+1)x^{-(p+2)}\big).
\end{align*}
Clearly
\begin{equation}\label{E:strict_pos}
  \phi_p^{\prime\prime}(x) + x^{-1}\phi_p^\prime(x)\,>\,0 \; \; \forall x: 0<x<1.
\end{equation}
It is well-known (we shall skip calculating further higher-order derivatives) that
$\phi_p$ extends to $[0,1)$ to belong to $\smoo^\infty([0,1))$ and  
vanishes to infinite order at $0$. In
view of \eqref{E:strict_pos}
and Lemma~\ref{L:radial_subh}, we conclude that the function $\Phi_p(z) := \phi_p(|z|)$
is subharmonic on the open unit disc.
\smallskip

It is easy to extend $\phi_p|_{(0,1/2]}$ to a $\smoo^\infty$ function on $(0, \infty)$
by matching the $n$-th derivative at $1/2$, of some smooth function on $[1/2, \infty)$, with
${\phi_p}^{\!\!(n)}(1/2)$, $n\in \nat$. If we call this extension $f$ and let $F$ be as
given by $(b)$, then, as $\laplc\Phi_p$ is \emph{strictly}
positive on the circle $\{z\in \C: |z| = 1/2\}$ (see \eqref{E:strict_pos} above), we
can also arrange for $\laplc F > 0$ on $\{z\in \C: |z|\geq 1/2\}$ and, indeed, for
$f$ to have all the properties stated in $(b)$ above. 
\smallskip

To complete the discussion of Example~\ref{Ex:mildly_inf_type}, we must show that $\lamf$
satisfies a doubling condition. Here,
$\lamf(x) = x^p$ $\forall x\in [0,1/2]$. Hence, if we fix some $\sigma\geq 2^{1/p}$\,($>1$),
then we have
\[
  2\lamf(x)\,\leq\,\lamf(\sigma x) \; \; \forall x\in [0, 1/2\sigma].
\]
Hence, $\OM_F$ satisfies the conditions of
Theorems~\ref{T:kernel} and~\ref{T:metric}.
\hfill $\blacktriangleleft$}
\medskip

Our next example is an illustration of a domain $\OM_F$ where
$\bdy\OM_F$ may be described to be extremely flat at $(0,0)$. There are some
commonalities in the methods used in \cite{kimLee:abBkaicitpd02} and in this
paper, which we shall elaborate on in Section~\ref{S:BergPrelim}. The key difference
between the two approaches is that Kim--Lee rely on scaling methods
in \cite{kimLee:abBkaicitpd02} to complete their proofs. Although we seek slightly
different conclusions from those
in \cite{kimLee:abBkaicitpd02}, if we were to rely on scaling methods, then we
would need a non-trivial Taylor approximation of $\lamf(x)$ around $x=0$, as is
the case in \cite{kimLee:abBkaicitpd02}. This is just not available for the $\lamf$
in Example~\ref{Ex:very_flat}, which is the relevance of this example.
 
\begin{example}\label{Ex:very_flat}
A domain $\OM_F$ satisfying the conditions of
Theorems~\ref{T:kernel} and~\ref{T:metric} such that $\bdy\OM_F$ is extremely
flat at $(0,0)$.
\end{example}

\noindent{Let $\psi$ be the function $\left.\phi_1\right|_{(0,1/2)}$, where
$\phi_1$ is as introduced in Example~\ref{Ex:mildly_inf_type}.
Now consider the function $f : [0,\infty)\lrarw [0, \infty)$ described by the following
conditions:
\begin{enumerate}[leftmargin=25pt]
  \item[$(a^\prime)$] With $\psi$ as above,
  \[
    f(x)\,:=\,\begin{cases}
    			e^{-1/\psi(x)}, &\text{if $0<x<1/2$},\\
    			0, &\text{if $x=0$}.
    			\end{cases}
  \]
  \item[$(b^\prime)$] $f|_{[1/2, \infty)}$ is so defined that $f|_{(0, \infty)}$ is of class 
  $\smoo^\infty$ and strictly increasing, the function $F: \C\lrarw [0, \infty)$ given by $F(z) := f(|z|)$
  is subharmonic on $\C$, and $\lim_{x\to \infty}f(x) = +\infty$.
\end{enumerate}
As in the discussion of Example~\ref{Ex:mildly_inf_type}, let us write
\[
  \phi(x)\,:=\,e^{-1/\psi(x)} \; \; \forall x\in (0,1).
\]
We shall omit the essentially elementary calculations showing that
$\phi$ extends to $[0,1)$ to belong to $\smoo^\infty([0,1))$ and
vanishes to infinite order at $0$. Just to indicate
the calculations needed: the last statement follows from the Fa{\'a} di Bruno formula
for the higher derivatives of the composition of two univariate functions (see 
\cite[Chapter~1]{krantzParks:praf02}, for instance) and the fact that
\[
  \lim_{x\to 0^+}e^{n/x}\sqrt{e^{-1/\psi(x)}}\,=\,\lim_{x\to 0^+}e^{n/x}
  \exp\big(\!-\!2^{-1}e^{1/x}\big)\,=\,\lim_{y\to 0^+}\frac{e^{-1/2y}}{y^n}\,=\,0
\]
for every $n\in \zahl_+$.
\smallskip 

However, it is useful to calculate couple of derivatives:
\begin{align*}
  \phi^\prime(x)\,&=\,x^{-2}e^{1/x}\exp\big(\!-\!e^{1/x}\big)\,\big(>\,0
  					\; \; \forall x\in (0,1)\big),\\
  \phi^{\prime\prime}(x)\,&=\,\exp\big(\!-\!e^{1/x}\big)\bcdot
  \big(x^{-4}e^{2/x} - x^{-4}e^{1/x} - 2x^{-3}e^{1/x}\big).
\end{align*}
Clearly
\begin{equation}\label{E:strict_pos_again}
  \phi^{\prime\prime}(x) + x^{-1}\phi^\prime(x)\,>\,0 \; \; \forall x: 0<x<1.
\end{equation}
By \eqref{E:strict_pos_again}
and Lemma~\ref{L:radial_subh}, we deduce that $\Phi(z) := \phi(|z|)$ (with
$\Phi(0) := 0$) is subharmonic on the open unit disc. By arguments analogous to those for
Example~\ref{Ex:mildly_inf_type}, it is easy to extend $\phi|_{(0,1/2]}$ to a
$\smoo^\infty$ function $f$ defined on $(0, \infty)$ so that $f$ has
all the properties listed in $(b^\prime)$. 
\smallskip

To complete the discussion of Example~\ref{Ex:very_flat}, we must show that $\lamf$
satisfies a doubling condition. Here,
$\lamf(x) = \psi(x)$ $\forall x\in [0,1/2]$. Fix an $\sigma$ such that
\[
  (\sigma-1)(\log{2})^{-1}\,\geq\,1/2.
\]
Then, whenever $0<\sigma{x}\leq 1/2$, we have
\[
 \sigma{x}\,\leq\,(\sigma-1)(\log{2})^{-1} \;\Rightarrow \;
  \log{2} - \frac{1}{x}\,\leq\,-\frac{1}{\sigma{x}}, 
\]
which implies that $2\psi(x)\leq \psi(\sigma{x})$ whenever $0\leq\sigma{x}\leq 1/2$.
Therefore, $\OM_F$ satisfies the
conditions of Theorems~\ref{T:kernel} and~\ref{T:metric}.
\hfill $\blacktriangleleft$}
\medskip

\section{Preliminaries}\label{S:BergPrelim}
This section is devoted to introducing the key ideas underlying the proofs in this paper. To this end, we
begin by introducing some of the notation that we shall frequently use.

\subsection{Common notations}
We fix the following notation.
\begin{enumerate}[leftmargin=25pt]
  \item $\D$ will denote the open unit disc in $\C$ with centre at $0$, 
  while $D(a, r)$ will denote the open disc in $\C$ with radius $r > 0$ and centre $a$.
  \vspace{0.65mm}
  
  \item For $\xi \in\CC$ (or, in general, in $\C^n$), $\|\xi\|$ will denote the Euclidean norm.
  Given points $z, w \in \C^n$, we shall commit a mild abuse of notation by not distinguishing
  between points and tangent vectors, and denote the Euclidean distance between them as
  $\|z - w\|$.
\end{enumerate}

\subsection{On the lower bounds presented in Theorems~\ref{T:kernel} and~\ref{T:metric}}
We now present an overview of how we shall derive the lower bounds given
by Theorems~\ref{T:kernel} and~\ref{T:metric}, which are the non-trivial parts of
these results. Implicit in both these
theorems is the fact that $A^2(\OM_F)$ is non-trivial. This, and a lot
else, follows from the following localisation result. We are able to invoke this result owing to
the conclusions of part~$(a)$ of Theorem~\ref{T:kernel}.

\begin{result}[paraphrasing {\cite[Lemma~3.2]{chenKamimotoOhsawa:bBki03}} by
Chen--Kamimoto--Ohsawa]\label{R:key}
Let $\OM := \{(z,w)\in \C^n\times\C : \im{w} > \rho(z)\}$, where 
$\rho$ is a non-negative plurisubharmonic function such that $\rho(0)=0$
and $\lim_{\|z\|\to \infty}\rho(z) = +\infty$. Let $V\Subset U$ be two open neighbourhoods of 
$0\in \bdy\OM$. Then, there is a constant $\delta\equiv\delta(U,V)>0$ such that
\begin{align}
  K_{\OM}(z,w)\,&\geq\,\delta K_{\OM\cap U}(z,w) \; \; \forall (z,w)\in \OM\cap V,
  \label{E:locali_kern} \\
  \met{\OM}(z,w;\,\xi)\,&\geq\,\delta \met{\OM\cap U}(z,w;\,\xi) \; \;
  \forall (z,w;\,\xi)\in \big(\OM\cap V\big)\times \C^{n+1}.
  \label{E:locali_met}
\end{align}
\end{result}

This localisation result allows us to obtain lower bounds for the quantities of interest
by finding lower bounds for the respective quantities associated to $\OM_F\cap\Delta$, where
$\Delta$ is a well-chosen bidisc centered at $(0,0)\in \bdy\OM_F$. We shall obtain the
latter lower bounds by appealing to certain extremal problems\,---\,sometimes referred to as
the Bergman--Fuchs formulas\,---\,that give the values of the
Bergman kernel (evaluated on the diagonal), and of the Bergman metric, for bounded domains:
see \cite{bergman:kbvr33} by Bergman (also see \cite{fuchs:gmepairg37} by Fuchs).
\smallskip

In this paragraph,
$\OM$ will denote an arbitrary domain in $\C^2$ (we restrict ourselves to $\C^2$ to avoid having
to define further notation). One of the Bergman--Fuchs formulas is:
\begin{equation}\label{E:B-F_kern}
  K_{\OM}(z,w)\,=\,\sup\left\{\frac{|\varphi(z,w)|^2}{\|\varphi\|^2_{\leb{2}(\OM)}} :
					\varphi\in A^2(\OM)\right\} \; \; \forall (z,w)\in \OM.
\end{equation}
A related formula is known for $\met{\OM}$. To see this, we need the following auxiliary
quantity
\begin{multline}\label{E:B-F_aux}
  \jay_{\OM}(z,w;\,\xi)\,:=\,\inf\left\{\|\varphi\|^2_{\leb{2}(\OM)} : \varphi\in A^2(\OM), \ 
  						\varphi(z,w)=0 \text{ and }\right. \\
  						 \partial_z\varphi(z,w)\xi_1 + \partial_w\varphi(z,w)\xi_2 = 1\Big\},
  						 \; \; (z,w)\in \OM, \ \xi\in \CC\!\setminus\!\{0\}.
\end{multline}
The Bergman--Fuchs formula for $\met{\OM}$ is
\begin{equation}\label{E:B-F_met}
  \met{\OM}(z,w;\,\xi)\,=\,\frac{1}{K_{\OM}(z,w)\,\jay_{\OM}(z,w;\,\xi)}
  \; \; \forall (z,w;\,\xi)\in \OM\times(\CC\!\setminus\!\{0\}).
\end{equation}

How these formulas help in deriving  the lower bounds given by Theorems~\ref{T:kernel}
and~\ref{T:metric} is summarised as follows:
\begin{itemize}[leftmargin=22pt]
  \item \emph{Step 1:} We choose a suitable bidisc $\Delta$ centered at
  $(0,0)$ (determined just by $f$).
  To obtain a lower bound for $K_{\loc}(z,s+it)$, we just need
  to find a suitable function $\varphi_t\in A^2(\loc)$, $t>f(|z|)$, such that\,---\,owing
  to \eqref{E:B-F_kern}\,---\,$\|\varphi_t\|^2_{\leb{2}(\loc)}$ has an upper bound
  that induces the lower bound in \eqref{E:approach}.
  \vspace{1mm}
  
  \item \emph{Step~2:} The latter task reduces
  to estimating an integral over a region in $\R^4$ whose boundaries are determined by $f$.
  The doubling condition is used to break up this
  region of integration into sub-domains on which the relevant integral is easier to
  estimate to sufficient precision that we get the desired upper bound.
  \vspace{1mm}
  
  \item \emph{Step~3:} In view of \eqref{E:B-F_aux}, we must
  to find a suitable function $\widetilde{\varphi}_t$ belonging, this time, to
  the class $\left\{\varphi\in A^2(\loc) :
  \varphi(z,w)=0 \text{ and }
  \partial_z\varphi(z,w)\xi_1 + \partial_w\varphi(z,w)\xi_2 = 1\right\}$
  in order to deduce a lower bound for  $\met{\loc}(z,s+it;\,\xi)$. In view of
  \eqref{E:B-F_met}, we need to obtain an upper bound of a specific form 
  for $\|\widetilde{\varphi_t}\|^2_{\leb{2}(\loc)}$. A procedure analogous to that
  described in Step~2 applies in computing the latter upper bound.
\end{itemize}
The final estimates hinted at by the above summary 
lead to the lower
bounds that we want\,---\,i.e., for the domain $\OM_F$\,---\,by the use of Result~\ref{R:key}.
\smallskip

To conclude this section, we elaborate upon some comments made in Section~\ref{S:intro}
about our condition on $\lamf$ in comparison to \cite{bharali:gBknitp11}.

\subsection{Relation to the main result in \textbf{{\cite{bharali:gBknitp11}}}}\label{SS:subsumes}
We give a justification of the assertion in Section~\ref{S:intro} that 
Theorem~\ref{T:kernel} subsumes the main result in \cite{bharali:gBknitp11}.
Given our statements on the condition~$(*)$ in \cite{bharali:gBknitp11}, it
suffices to show that the condition imposed on $\lamf$ in \cite{bharali:gBknitp11} implies that
$\left.\lamf\right|_{[0, R]}$ satisfies a doubling condition for some $R>0$.
To this end, recall that for $F$ (and the associated $f$) as in
\cite[Theorem~1]{bharali:gBknitp11}, there exist constants $\eps_0 > 0$ and $B\geq 1$ such that
\begin{equation}\label{E:old_lamf_bounds}
  (1/B)\chi(x)\,\leq\,\lamf(x)\,\leq\,B\chi(x) \; \; \forall x\in[0,\eps_0],
\end{equation}
where $\chi\in \smoo([0,\eps_0])$ is an increasing function
such that $\chi^p$ is convex on $(0,\eps_0)$ for some
$p>0$. It follows that, setting $\nu:=\min\{m\in \nat: 2^m\geq p\}$, $\chi|_{[0,2^{-(\nu+1)}\eps_0]}$
satisfies a doubling condition. 
Write $R := 2^{-(\nu+1)}\eps_0$ and let $\sigma > 1$ be a 
doubling constant for $\chi|_{[0, R]}$. Let $N\in \zahl_+$ be such that
$2^N\geq 2B^2$. Then, by \eqref{E:old_lamf_bounds}
\[
  2\lamf(x)\,\leq\,2B\chi(x)\,\leq\,2^N(1/B)\chi(x)\,\leq\,(1/B)\chi(\sigma^Nx)\,\leq\,\lamf(\sigma^Nx)
  \; \; \forall x\in [0, R/\sigma^N].
\]
In view of the above discussion, it follows that Theorem~\ref{T:kernel} subsumes the
main result in \cite{bharali:gBknitp11}.
\smallskip

\section{Technical lemmas}\label{S:analLemmas}
We present some lemmas that play a supporting role in the proofs of
Theorems~\ref{T:kernel} and~\ref{T:metric}.
\smallskip

\begin{lemma}\label{L:diff_of_squares}
Let $f: [0, \infty)\lrarw \R$ be a strictly increasing function satisfying $f(0)=0$. Let
$\lamf|_{[0,R]}$ satisfy a doubling condition for some $R\in \big(0, f^{-1}(1)\big)$. Write
$G_f := \lamf^{-1}$. There exist constants $T, C'>0$ such that
\[
  0\,\leq\,G_f(2t)^{2n}-G_f(t)^{2n}\,\leq\,C'G_f(t)^{2n} \; \; \forall t\in [0,T],
\]
$n=1,2$.
\end{lemma}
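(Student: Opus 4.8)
The plan is to exploit the elementary duality between a doubling condition for an increasing function and a controlled growth ratio for its inverse. First I would observe that in our setting $\lamf|_{[0,R]}$ is continuous and strictly increasing with $\lamf(0)=0$ (continuity of $\lamf$ follows from that of $f$, which in all our applications is $\smoo^\infty$); hence $G_f:=\lamf^{-1}$ is well defined, continuous and strictly increasing on $[0,\lamf(R)]$, with $G_f(0)=0$. In particular, once we have arranged that $2t$ lies in $[0,\lamf(R)]$, the left-hand inequality $G_f(2t)^{2n}-G_f(t)^{2n}\geq 0$ is immediate from the monotonicity and non-negativity of $G_f$ together with $2t\geq t$.

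The substance of the lemma lies in the upper bound, and the key claim is: if $\sigma>1$ is a doubling constant for $\lamf|_{[0,R]}$, then
\[
  G_f(2t)\,\leq\,\sigma\,G_f(t) \; \; \forall t\in[0,T], \qquad T:=\lamf(R/\sigma).
\]
To prove this, fix $t\in[0,T]$ and put $x:=G_f(t)$, so that $x\leq G_f(T)=R/\sigma$ and $\lamf(x)=t$. The doubling condition applies at $x$ and yields $2t=2\lamf(x)\leq\lamf(\sigma x)$; moreover, applying the doubling condition once more at $x=R/\sigma$ gives $2t\leq 2\lamf(R/\sigma)\leq\lamf(R)$, so that $G_f(2t)$ is defined. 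Since $\lamf(G_f(2t))=2t\leq\lamf(\sigma x)=\lamf(\sigma G_f(t))$ and $\lamf$ is strictly increasing, we conclude $G_f(2t)\leq\sigma G_f(t)$, as claimed.

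Finally, raising this inequality to the power $2n$ for $n=1,2$ gives $G_f(2t)^{2n}\leq\sigma^{2n}G_f(t)^{2n}$, hence
\[
  0\,\leq\,G_f(2t)^{2n}-G_f(t)^{2n}\,\leq\,(\sigma^{2n}-1)\,G_f(t)^{2n}\,\leq\,(\sigma^{4}-1)\,G_f(t)^{2n},
\]
so the lemma holds with $T=\lamf(R/\sigma)$ and $C'=\sigma^{4}-1$. I do not expect a genuine obstacle here: the only point requiring care is the bookkeeping of domains\,---\,choosing $T$ small enough that the doubling condition is applicable at $x=G_f(t)$ and that the argument $2t$ still lies in the domain of $G_f$\,---\,while the one substantive observation is the passage from the doubling of $\lamf$ to the bounded ratio for its inverse $G_f$.
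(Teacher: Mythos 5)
Your proof is correct and follows essentially the same route as the paper: both arguments reduce to the key inequality $G_f(2t)\leq\sigma G_f(t)$ on $[0,T]$ with $T=\lamf(R/\sigma)$, obtained by parametrising $x=G_f(t)$ and applying the doubling condition. The only (cosmetic) difference is the final step\,---\,you raise that inequality to the power $2n$ directly, whereas the paper expands $G_f(2t)^{2n}-G_f(t)^{2n}$ via difference-of-powers identities; your version is marginally cleaner and yields the explicit constant $C'=\sigma^4-1$.
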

\begin{proof}
Let $\sigma > 1$ be a doubling constant for $\lamf$. Write $T:= \lamf(R/\sigma)$.
Since, by the doubling condition,
\begin{equation}\label{E:doub}
  2\lamf(x)\,\leq\,\lamf(\sigma x\big) \; \; \forall x\in [0, R/\sigma],
\end{equation}
it follows that
\[
  G_f\big(2\lamf(x)\big)\,\leq\,\sigma x \; \; \forall x\in [0, R/\sigma].
\]
This inequality holds on the interval stated since, by 
\eqref{E:doub}, $2\lamf(x)\in {\sf dom}(G_f)$ $\forall x\in [0, R/\sigma]$.
Parametrising the latter interval by $G_f: [0, T]\lrarw [0, R/\sigma]$, we
can take $x = G_f(t)$, $t\in [0, T]$, in the last inequality to
get
\[
  G_f(2t)\,\leq\,\sigma G_f(t) \; \; \forall t\in [0, T].
\]
This implies:
\begin{equation}\label{E:penultimate}
  G_f(2t)-G_f(t)\,\leq\,(\sigma-1)G_f(t) \; \; \forall t\in [0, T].
\end{equation}
Since
\[
  G_f(2\,\bcdot)^2-\geef{2}\,=\,\big(G_f(2\,\bcdot)-G_f\big)^2 + 2\big(G_f(2\,\bcdot)-G_f\big)G_f\,,
\]
and
\begin{align*}
  G_f(2\,\bcdot)^4-\geef{4}\,&=\,\big(G_f(2\,\bcdot)^2-\geef{2}\,\big)
  									\big(G_f(2\,\bcdot)-G_f\big)^2 \\
  						&\quad + 2\big(G_f(2\,\bcdot)^2-\geef{2}\,\big)
							\big(G_f(2\,\bcdot)-G_f\big)G_f
							+ 2\big(G_f(2\,\bcdot)^2-\geef{2}\,\big)\geef{2}\,,
\end{align*}
we can find an appropriate constant $C'>0$ so that
the desired conclusion follows from \eqref{E:penultimate}.
\end{proof}

The aim of our next two lemmas is to estimate the norms of certain functions in $A^2(\loc)$, where
$\Delta$ is an appropriately chosen bidisc, from which we shall build candidates for such
functions as can be used in the argument sketched
in Steps~1--3 in Section~\ref{S:BergPrelim}.

\begin{lemma}\label{L:L_2_norm}
Let $f\in \smoo^\infty\big([0, \infty)\big)$ be a strictly increasing function that vanishes to infinite
order at $0$ and let
$\lamf$ satisfy the condition stated in Lemma~\ref{L:diff_of_squares}.
Let $F$ and $\OM_F$ be determined by $f$ as described in Section~\ref{S:intro}.
Write $a:=\min\{f^{-1}(1), 1\}$ and write $\Delta:=D(0,a)\times\D$. There
exist constants $C^*, r_0>0$ such that, for any $n\in \{0,1\}$, $\alpha, t>0$, $\beta>1$
and $z\in \C$, if we write
\[
  \psi(\zt,w; \alpha, \beta, n, t, z)\,:=\,\frac{|z|^\alpha\,t^\beta\,\zt^n}{(w+it)^2}
  \; \; \forall (\zt, w)\in \loc,
\]
then
\begin{align}
  \|\psi(\bcdot\,;\alpha, \beta, n, t, z)\|^2_{\leb{2}(\loc)}\,\leq\,C^*t^{2(\beta-1)}
  									&\big(f^{-1}(t)\big)^{2(\alpha+n+1)} \notag \\
  &\forall (z,t): (z,it)\in \loc \text{ and } t<r_0.  \label{E:key_norm_bd}
\end{align}
\end{lemma}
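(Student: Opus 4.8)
The key point is that $\loc = \Delta \cap \OM_F$ where $\Delta = D(0,a)\times\D$, so the integration variable $(\zt,w)$ ranges over $\zt\in D(0,a)$ and $w\in\D$ subject to $\im w > F(\zt) = f(|\zt|)$. Writing $w = s+iu$ with $s\in\R$, $u>f(|\zt|)$, and $|w+it|^2 = s^2 + (u+t)^2$, I would first perform the $s$-integration. For fixed $u$, $\int_{\R}\frac{ds}{(s^2+(u+t)^2)^2}$ is a standard integral equal to a constant times $(u+t)^{-3}$; restricting to $s$ with $s^2 + u^2 < 1$ only shrinks the domain, so this gives a clean upper bound $\lesssim (u+t)^{-3}$. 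Thus, after also bounding $|\zt|^n \le |\zt|^n$ and $|z|^\alpha$ by its (fixed) value,
\[
  \|\psi(\bcdot\,;\alpha,\beta,n,t,z)\|^2_{\leb{2}(\loc)}
  \,\lesssim\, |z|^{2\alpha} t^{2\beta} \int_{D(0,a)} |\zt|^{2n}
  \Big(\int_{f(|\zt|)}^{\infty} \frac{du}{(u+t)^3}\Big)\, dA(\zt).
\]
The inner $u$-integral equals $\tfrac12 (f(|\zt|)+t)^{-2}$.

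Switching to polar coordinates $\zt = re^{i\tht}$, the problem reduces to estimating
\[
  I(t)\,:=\,\int_0^a \frac{r^{2n+1}}{(f(r)+t)^2}\, dr.
\]
The main obstacle is exactly this one-variable integral: $f$ vanishes to infinite order at $0$, so near $r=0$ the integrand is $\approx r^{2n+1}/t^2$, contributing $\lesssim (f^{-1}(t))^{2n+2}/t^2$ once one identifies that the transition between the two regimes $f(r)\lessgtr t$ happens at $r \approx f^{-1}(t)$. The plan is to split the integral at $r = f^{-1}(t)$ (valid for $t < 1 = $ value making $f^{-1}(t)<a$, i.e. for small $t$ — this is where $r_0$ enters). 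On $[0, f^{-1}(t)]$ we bound $f(r)+t \ge t$, giving $\int_0^{f^{-1}(t)} r^{2n+1}\,dr/t^2 = (f^{-1}(t))^{2n+2}/((2n+2)t^2)$, which is $t^{-2}(f^{-1}(t))^{2(n+1)}$ up to a constant. On $[f^{-1}(t), a]$ we bound $f(r) + t \ge f(r)$; here I would dyadically decompose the remaining interval and use the doubling condition on $\lamf$ (equivalently Lemma~\ref{L:diff_of_squares}, or rather the inequality $G_f(2t) \le \sigma G_f(t)$ extracted in its proof, applied after the change of variables $r = f^{-1}(v) = G_f(\lamf(f(r)))$... more directly: the doubling condition translates into a control of how fast $f^{-1}$ grows) to sum the geometric-type series and bound the tail again by a constant times $t^{-2}(f^{-1}(t))^{2(n+1)}$. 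Since $n\in\{0,1\}$ the exponent $2(n+1) = 2(\alpha+n+1)$ once we reinsert the $|z|^{2\alpha}$ only if $\alpha$ appears — but note the claimed bound has $(f^{-1}(t))^{2(\alpha+n+1)}$, so the $|z|^{2\alpha}$ must be absorbed: indeed $|z|^{2\alpha} \le (f^{-1}(t))^{2\alpha}$ precisely when $(z,it)\in\loc$, i.e. $f(|z|) < t$, hence $|z| < f^{-1}(t)$. That observation is the one extra ingredient beyond the integral estimate.

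Collecting: $\|\psi\|^2_{\leb{2}(\loc)} \lesssim t^{2\beta}\cdot |z|^{2\alpha}\cdot I(t) \lesssim t^{2\beta}\cdot (f^{-1}(t))^{2\alpha}\cdot t^{-2}(f^{-1}(t))^{2(n+1)} = t^{2(\beta-1)}(f^{-1}(t))^{2(\alpha+n+1)}$, which is the asserted bound \eqref{E:key_norm_bd}, with $C^*$ absorbing all the constants (which depend only on $f$ through $\sigma$, $R$, the doubling data, and the numerical constants from the $s$- and $u$-integrals) and $r_0$ chosen small enough that $f^{-1}(t) < \min\{a, R/\sigma\}$ for $t < r_0$ so that the dyadic decomposition lives inside the region where the doubling condition is available. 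I expect the dyadic summation using the doubling condition to be the only genuinely delicate step; the rest is bookkeeping.
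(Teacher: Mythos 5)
Your reduction of the norm to the one-variable integral $\int_0^a r^{2n+1}(t+f(r))^{-2}\,dr$ (via the $\re w$- and $\im w$-integrations and polar coordinates), the estimate on $[0,f^{-1}(t)]$, and the absorption of $|z|^{2\alpha}$ using $|z|<f^{-1}(t)$ all match the paper. The gap is in the tail $\int_{f^{-1}(t)}^{a}$, where ``dyadically decompose and sum a geometric-type series using the doubling condition'' does not go through as stated. The doubling hypothesis is a statement about doubling the \emph{argument} of $G_f=\lamf^{-1}$, i.e.\ about $\lamf=-1/\log f$, not about $f$: passing from the level $f(r)=2^jt$ to $f(r)=2^{j+1}t$ changes $\lamf(r)$ by a factor of only $1+O(1/\log(1/t))$, so the doubling condition gives essentially no leverage on a single dyadic step in $f$; conversely, one full doubling of $\lamf(r)$ carries $f(r)$ all the way from $t$ to $\sqrt{t}$. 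If instead you try to iterate a bound of the form $f^{-1}(2^{j+1}t)\le\sigma f^{-1}(2^{j}t)$, the resulting series $\sum_j(\sigma^{2n+2}/4)^j$ diverges as soon as $\sigma\ge 2^{1/(n+1)}$, and the doubling constant is large precisely in the examples of interest (Example~\ref{Ex:mildly_inf_type} requires $\sigma\ge 2^{1/p}$ with $p$ small).

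What does work\,---\,and is what the paper does\,---\,is a single further split of the tail at $r=f^{-1}(\sqrt{t}\,)$. On $[f^{-1}(t),f^{-1}(\sqrt{t}\,)]$ the argument of $G_f$ doubles exactly once (from $1/\log(1/t)$ to $2/\log(1/t)$), so \emph{one} application of Lemma~\ref{L:diff_of_squares} bounds that piece by a constant times $t^{-2}\big(f^{-1}(t)\big)^{2n+2}$. On $[f^{-1}(\sqrt{t}\,),a]$ one uses only $(t+f(r))^2\ge 4tf(r)\ge 4t^{3/2}$ to get $O(t^{-3/2})$, and this term is then absorbed into $t^{-2}\big(f^{-1}(t)\big)^{2n+2}$ because $f$ vanishes to infinite order at $0$, whence $\big(f^{-1}(t)\big)^{2n+2}\ge t^{1/2}$ for all small $t$. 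This last ingredient\,---\,the infinite-order vanishing, which also dictates the choice of $r_0$\,---\,is entirely absent from your plan, and without it the contribution from the region $f(r)>\sqrt{t}$ is not controlled by the claimed bound (crude bounds there only yield $t^{-3/2}$ or $t^{-1}\log(1/t)$, which beat $t^{-2}\big(f^{-1}(t)\big)^{2n+2}$ only via infinite-order vanishing).
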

\begin{proof}
Let us write $w=u+iv$ and abbreviate $\psi(\bcdot\,;\alpha, \beta, n, t, z)$ as $\psi$.
We leave it to the reader to verify that we can apply Fubini's theorem
wherever necessary in the following computation:
\begin{align}
  \|\psi\|^2_{\leb{2}(\loc)}\,&=\,\int_{|\zt|\leq a}\ \int_{-\sqrt{1-F^2(\zt)}}^{\sqrt{1-F^2(\zt)}}
  \ \int_{F(\zt)}^{\sqrt{1-u^2}}
  \frac{|z|^{2\alpha}\,t^{2\beta}\,|\zt|^{2n}}{|u+i(v+t)|^4}dv\,du\,dA(\zt) \notag \\
  &\leq\,|z|^{2\alpha}\,t^{2\beta}\lint{|\zt|\leq a}{{}} \ \lint{F(\zt)}{\infty} \ \lint{-1}{1}(v+t)^{-4}
   \left(1+\Big(\frac{u}{v+t}\Big)^2\right)^{-2}|\zt|^{2n}\,du\,dv\,dA(\zt) \notag \\
  &\leq\,|z|^{2\alpha}\,t^{2\beta}\Bigg(\int_{\mathbb{R}}\frac{ds}{(1+s^2)^2}\Bigg)
   \lint{|\zt|\leq a}{{}} \ \lint{F(\zt)}{\infty} \ (v+t)^{-3}|\zt|^{2n}\,dv\,dA(\zt) \notag \\
  &=\,C|z|^{2\alpha}\,t^{2\beta}\int_{0}^{a}\frac{r^{2n+1}}{(t+f(r))^2}\,dr\,, \label{E:f-integ}
\end{align}
where $C>0$ is a universal constant.
\smallskip

In the remainder of this argument, $B>0$ will denote a constant
whose value is independent of the variables involved, whose actual value is not of interest, and
which may change from line to line.
For any $y>0$, set $R_{y}:=f^{-1}(y)$. Write $G_f = \lamf^{-1}$.
By definition, we get
\begin{equation}\label{E:R_sqrt}
  R_{\sqrt{t}}\,=\,G_f\left(\frac{2}{\log(1/t)}\right), \; \; 0<t<1.
\end{equation}
We now break up the interval of integration of the integral in \eqref{E:f-integ}. For simplicity
of notation, we shall initially consider all $t$ such that $0<t<f(a)^2$, to get:
\begin{align}
  \int_{0}^{a}\frac{r^{2n+1}}{(t+f(r))^2}\,dr\,&=\,\left(\int_0^{R_t} + \ \int_{R_t}^{R_{\sqrt{t}}} + \
  \int_{R_{\sqrt{t}}}^a\frac{r^{2n+1}}{(t+f(r))^2}\,dr\right) \notag \\
  &\leq\,\int_0^{R_t}\frac{r^{2n+1}}{t^2}\,dr +
   \left(\int_{R_t}^{R_{\sqrt{t}}} + \ \int_{R_{\sqrt{t}}}^a\,\frac{r^{2n+1}}{4tf(r)}\,dr\right) \notag  \\
  &\leq\,B\Big(t^{-2}R_t^{2n+2} + \frac{1}{tf(R_{\sqrt{t}})}\,a^{2n+2}\Big)
   + \int_{R_t}^{R_{\sqrt{t}}}\frac{r^{2n+1}}{4tf(r)}\,dr \notag \\
  &\leq\,B\big(t^{-2}R_t^{2n+2} + t^{-3/2}\big)
   + \int_{R_t}^{R_{\sqrt{t}}}\frac{r^{2n+1}}{4tf(r)}\,dr \notag \\
  &\leq\,B\big(t^{-2}R_t^{2n+2} + t^{-3/2}\big) \notag \\
  &\qquad\qquad + \frac{t^{-2}}{4}\Bigg[G_f\!\left(\frac{2}{\log(1/t)}\right)^{2n+2}
   - G_f\!\left(\frac{1}{\log(1/t)}\right)^{2n+2}\Bigg].    \label{E:modulo_middle}
\end{align} 
In the above calculation, the third inequality follows from the fact that, by definition, $a\leq 1$, while
the estimate for the middle integral draws upon \eqref{E:R_sqrt}.
\smallskip

We shall now apply Lemma~\ref{L:diff_of_squares} to the expression in brackets
in \eqref{E:modulo_middle}. Let 
$T>0$ and $C'>0$ be as given by that lemma. At this stage,
let us fix $t$ to be in $(0, \min\{f(a)^2, e^{-1/T}\})$. By Lemma~\ref{L:diff_of_squares} and
\eqref{E:modulo_middle}:
\begin{align}
  \int_{0}^{a}\frac{r^{2n+1}}{(t+f(r))^2}\,dr\,&\leq\,B(t^{-2}R_t^{2n+2} + t^{-3/2})
  							+ \frac{C'}{4}t^{-2}G_f\!\left(\frac{1}{\log(1/t)}\right)^{2n+2} \notag \\
											&\leq\,B\big(t^{-2}R_t^{2n+2} + t^{-3/2}\big). \label{E:integ2}
\end{align}											
By the hypothesis that $f$ vanishes to infinite order at $0$, it follows that for any
$p, q>0$,
\begin{equation}\label{E:slow_decay}
  \frac{\big(f^{-1}(t)\big)^p}{t^q}\lrarw \infty \text{ as $t\to 0^+$}.
\end{equation}
Thus, there is a constant $c>0$ such that
\[
  R_t^{2n+2}\,\geq\,t^{1/2} \; \; \forall t\in (0,c).
\]
Set $r_0:=\min\{f(a)^2, e^{-1/T}, c\}$. Then, from the above inequality, \eqref{E:integ2} and
\eqref{E:f-integ}, we get
\[
  \|\psi\|^2_{\leb{2}(\loc)}\,\leq\,C\bcdot B|z|^{2\alpha}\,t^{2\beta-2}\big(f^{-1}(t)\big)^{2n+2}
  \; \; \forall t\in (0,r_0) \text{ and $n=0,1$}.
\]
Recall that if $(z,it)\in \OM_F$, then $t>F(z)=f(|z|)$. From this and the previous inequality
(we set $C^*:=C\bcdot B$), the lemma follows.
\end{proof}

A part of the proof of Theorem~\ref{T:metric} requires estimates for the norms
of certain functions in $A^2(\loc)$ that are not addressed by Lemma~\ref{L:L_2_norm}. Thus we
need:
 
\begin{lemma}\label{L:L_2_norm_alt}
Let $f\in \smoo^\infty\big([0, \infty)\big)$, $a>0$, and $\OM_F$, $\Delta\subset \C^2$
be exactly as in Lemma~\ref{L:L_2_norm}. There
exist constants $C^*, r_0>0$ such that, for any $n\in \{0,1\}$ and $t>0$, if we write
\[
  \phi(\zt,w; n,t) := \frac{t^3w^n}{(w+it)^{3}} \; \; \forall (\zt, w)\in \loc,
\]
then
\begin{equation}
  \|\phi(\bcdot\,;n, t)\|^2_{\leb{2}(\loc)}\,\leq\,C^*t^{2+2n}\big(f^{-1}(t)\big)^{2} \; \;
  \forall t\in (0, r_0).  \label{E:key_norm_bd_alt}
\end{equation}
\end{lemma}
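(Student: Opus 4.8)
\textbf{Proof proposal for Lemma~\ref{L:L_2_norm_alt}.}

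The plan is to mirror the structure of the proof of Lemma~\ref{L:L_2_norm}, with the modifications forced by the new integrand $\phi(\zt,w;n,t) = t^3w^n/(w+it)^3$. First I would set $w = u+iv$, abbreviate $\phi(\bcdot\,;n,t)$ as $\phi$, and write out $\|\phi\|^2_{\leb{2}(\loc)}$ as the triple integral over the region $\{|\zt|\le a\} \times \{v > F(\zt)\} \times \{\text{bidisc constraint on }u\}$, exactly as in \eqref{E:f-integ}. The key pointwise bound is $|w^n| = |u+iv|^n \lesssim (u^2+v^2)^{n/2}$ and, since $|\zt|\le a\le 1$ forces $|u|\le 1$ on $\loc$, we have $|u+iv|^{2n} \lesssim 1 + v^{2n}$; combined with $|w+it|^{-6} = |u+i(v+t)|^{-6}$ and the substitution $s = u/(v+t)$, the $u$-integral contributes a convergent factor $\int_\R (1+s^2)^{-3}\,ds$ times $(v+t)^{-5}$ (one power of $(v+t)$ is eaten by $du = (v+t)\,ds$). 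This reduces, after passing to polar coordinates in $\zt$, to showing
\[
  t^6 \int_0^a \frac{r\,(1 + (t+f(r))^{2n})}{(t+f(r))^4}\,dr \,\lesssim\, t^{2+2n}\big(f^{-1}(t)\big)^2,
\]
where I have bounded $v^{2n} \lesssim (v+t)^{2n}$ and absorbed it; after splitting off the $+1$ term this is really two estimates, one with integrand $r/(t+f(r))^4$ and one with $r/(t+f(r))^{4-2n}$, the latter being $r/(t+f(r))^2$ when $n=1$ and $r/(t+f(r))^4$ when $n=0$.

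Next I would carry out the interval decomposition $\int_0^a = \int_0^{R_t} + \int_{R_t}^{R_{\sqrt t}} + \int_{R_{\sqrt t}}^a$ with $R_y := f^{-1}(y)$, exactly as in \eqref{E:modulo_middle}. On $[0,R_t]$ one has $t+f(r)\ge t$, so the integrand is $\le r/t^4$ (resp.\ $r/t^2$), giving $\lesssim t^{-4}R_t^2$ (resp.\ $t^{-2}R_t^2$). On $[R_{\sqrt t},a]$ one has $t+f(r)\ge f(r)\ge f(R_{\sqrt t}) = \sqrt t$, so the integrand is $\lesssim r/t^2$ (resp.\ $r/t$), giving $\lesssim t^{-2}$ (resp.\ $t^{-1}$). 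On the middle interval $[R_t,R_{\sqrt t}]$ I would use $t+f(r)\ge f(r)$ and $t+f(r)\ge t$ together, i.e.\ $(t+f(r))^{-4} \le t^{-2}f(r)^{-2}$ (resp.\ $(t+f(r))^{-2}\le t^{-1}f(r)^{-1}$), reducing to $\int_{R_t}^{R_{\sqrt t}} r\,f(r)^{-2}\,dr$ (resp.\ $\int_{R_t}^{R_{\sqrt t}} r\,f(r)^{-1}\,dr$) — and here I would use the substitution $y = f(r)$, $r = f^{-1}(y) = G_f(\Lambda_f\text{-type expression})$, to rewrite this as a difference $G_f(2/\log(1/t))^2 - G_f(1/\log(1/t))^2$ (resp.\ the corresponding first-power difference), precisely as in the passage leading to \eqref{E:modulo_middle}. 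Then Lemma~\ref{L:diff_of_squares} bounds this difference by $C' G_f(1/\log(1/t))^2 \lesssim R_t^2$ (and for the first-power case one needs the analogous linear bound, which also follows from \eqref{E:penultimate}).

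Collecting the three pieces, the worst term is $t^{-4}R_t^2$ (for the $r/(t+f(r))^4$ piece) and $t^{-2}R_t^2$ (for the $r/(t+f(r))^2$ piece), the lower-order $t^{-2}$ and $t^{-3/2}$-type remainders being dominated once we invoke \eqref{E:slow_decay}: since $f$ vanishes to infinite order at $0$, $\big(f^{-1}(t)\big)^p/t^q \to \infty$, so for small $t$ we have $R_t^2 \ge t^{1/2}$, which lets the $R_t^2$-terms swallow the others after shrinking $r_0$. Multiplying back by $t^6$ gives $t^6 \cdot t^{-4}R_t^2 = t^2 R_t^2$ for the $n=0$ contribution and $t^6\cdot t^{-2}R_t^2 = t^4 R_t^2 = t^{2+2n}R_t^2$ with $n=1$ for the $n=1$ contribution, which is exactly the claimed bound \eqref{E:key_norm_bd_alt} with $R_t = f^{-1}(t)$ after setting $C^*$ to absorb all the accumulated universal constants. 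The main obstacle is bookkeeping: one must be careful that the power of $w^n$ in the numerator interacts correctly with the $v$-range $v>F(\zt)$ (it only helps, never hurts, because $v$ is bounded on the bidisc) and that the $n$-dependence of the denominator exponent lines up so that the middle-interval difference is always a difference of \emph{at most} squares of $G_f$, so that Lemma~\ref{L:diff_of_squares} applies as stated; everything else is a routine repetition of the argument for Lemma~\ref{L:L_2_norm}.
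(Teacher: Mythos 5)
Your overall architecture matches the paper's proof: the reduction to a radial integral in $r=|\zt|$, the three-interval decomposition at $R_t$ and $R_{\sqrt{t}}$, the appeal to Lemma~\ref{L:diff_of_squares} on the middle piece, and the absorption of the remainder terms via \eqref{E:slow_decay} are all exactly what the paper does. But there is a genuine gap in your handling of the numerator $|w|^{2n}$, and it breaks the case $n=1$. You bound $|u+iv|^{2n}\lesssim 1+v^{2n}$ using $|u|\le 1$, which leaves you with the two integrands $r/(t+f(r))^{4}$ (from the ``$+1$'') and $r/(t+f(r))^{4-2n}$. For $n=1$ the first of these is still present and dominates; by your own accounting it contributes $t^{6}\cdot t^{-4}R_t^{2}=t^{2}R_t^{2}$, which exceeds the target $t^{2+2n}R_t^{2}=t^{4}R_t^{2}$ by a factor of $t^{-2}$. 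No constant can repair this: since $t+f(r)\le 2t$ on $[0,R_t]$, one has $\int_0^{R_t}r(t+f(r))^{-4}\,dr\ge R_t^{2}/(32\,t^{4})$, so the $t^{2}R_t^{2}$ term is genuinely attained rather than being an artefact of a loose estimate. Your final paragraph silently assigns the $r/(t+f(r))^{4}$ piece only to $n=0$, which is where the bookkeeping goes wrong.

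The repair is to not discard $u^{2n}$: after the substitution $s=u/(v+t)$ one has $\int_{-1}^{1}u^{2n}\,|u+i(v+t)|^{-6}\,du\le (v+t)^{2n-5}\int_{\R}s^{2n}(1+s^{2})^{-3}\,ds$, so the $u^{2n}$ in the numerator buys the factor $(v+t)^{2n}$ exactly as the $v^{2n}$ term does. This is precisely the paper's estimate of $I_1$ in \eqref{E:f-integ_finer1}, with $I_2$ handled by $(v/(v+t))^{2n}\le 1$ as you propose. With this, both halves of the numerator lead to the single integrand $r/(t+f(r))^{4-2n}$, the ``$+1$'' term never appears, and the remainder of your argument goes through. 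Note that the sharp $n=1$ bound $t^{4}\big(f^{-1}(t)\big)^{2}$ is not cosmetic: it is exactly what Case~1 of the proof of Theorem~\ref{T:metric} needs to obtain $\met{F}(z,it;\,\xi)\gtrsim |\xi_2|^{2}t^{-2}$, so the loss of a factor $t^{2}$ cannot be tolerated. (A minor slip: $|u|\le 1$ on $\loc$ comes from $w\in\D$, not from $|\zt|\le a\le 1$.)
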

\begin{proof}
As in the proof of Lemma~\ref{L:L_2_norm}, we
write $w=u+iv$ and, for $n\in \{0,1\}$, compute:
\begin{align}
  \|\phi(\bcdot\,;n, t)\|^2_{\leb{2}(\loc)}\,&=\,\int_{|\zt|\leq a}\ \int_{-\sqrt{1-F^2(\zt)}}^{\sqrt{1-F^2(\zt)}}
  \ \int_{F(\zt)}^{\sqrt{1-u^2}}
  \frac{t^6(u^2 + v^2)^n}{|u+i(v+t)|^6}dv\,du\,dA(\zt) \notag \\
  &\leq\,t^6\!\!\lint{|\zt|\leq a}{{}} \ \lint{F(\zt)}{\infty} \ \lint{-1}{1}
     \frac{u^{2n}}{|u+i(v+t)|^6} +  \frac{v^{2n}}{|u+i(v+t)|^6}\,du\,dv\,dA(\zt) \notag \\
  &\equiv t^6(I_1 + I_2). \label{E:f-integ_crude}
\end{align}

Next, we estimate:
\begin{align}
  I_1\,&=\,\lint{|\zt|\leq a}{{}} \ \lint{F(\zt)}{\infty} \ \lint{-1}{1}(v+t)^{2n-6}
   \frac{\big(u/(v+t)\big)^{2n}}{\left(1 + \big(u/(v+t)\big)^2\right)^{\!3}}\,du\,dv\,dA(\zt) \notag \\
  &\leq\,\Bigg(\int_{-1}^{1}\frac{s^{2n}}{(1+s^2)^3}\,ds\Bigg)
   \int_{|\zt|\leq a} \ \int_{F(\zt)}^{\infty} \ (v+t)^{2n-5}\,dv\,dA(\zt) \notag \\
  &=\,C\int_{0}^{a}\frac{r}{(t+f(r))^{4-2n}}\,dr\,, \label{E:f-integ_finer1}
\end{align}
and, analogously:
\begin{align}
  I_2\,&=\,\lint{|\zt|\leq a}{{}} \ \lint{F(\zt)}{\infty} \ \lint{-1}{1}(v+t)^{2n-6}
   \frac{\big(v/(v+t)\big)^{2n}}{\left(1 + \big(u/(v+t)\big)^2\right)^{\!3}}\,du\,dv\,dA(\zt) \notag \\
  &\leq\,\Bigg(\int_{-1}^{1}\frac{ds}{(1+s^2)^3}\Bigg)
   \int_{|\zt|\leq a} \ \int_{F(\zt)}^{\infty} \ (v+t)^{2n-5}\,dv\,dA(\zt) \notag \\
  &=\,C\int_{0}^{a}\frac{r}{(t+f(r))^{4-2n}}\,dr\,, \label{E:f-integ_finer2}
\end{align}
where, in both estimates above, $C>0$ is a constant independent of $t$ and $n$.
\smallskip

For any $y>0$, define $R_{y}:=f^{-1}(y)$. 
In what follows, $B>0$ will denote a constant whose value is independent of the variables
involved, and which may change from line to line.
Since the intermediate inequalities leading to \eqref{E:integ2_alt} are \emph{completely analogous}
to those in the proof of Lemma~\ref{L:L_2_norm}, we shall be brief. From
\eqref{E:f-integ_finer1} and \eqref{E:f-integ_finer2}:
\begin{align}
  I_1+I_2\,&\leq\,B\left(\int_0^{R_t} + \ \int_{R_t}^{R_{\sqrt{t}}} + \
  \int_{R_{\sqrt{t}}}^a\frac{r}{(t+f(r))^{4-2n}}\,dr\right) \notag \\
  &\leq\,B\int_0^{R_t}\frac{r}{t^{4-2n}}\,dr +
   B\left(\int_{R_t}^{R_{\sqrt{t}}} + \ \int_{R_{\sqrt{t}}}^a\,\frac{r}{4\big(tf(r)\big)^{2-n}}\,dr\right) \notag  \\
  &\leq\,B\big(t^{2n-4}R_t^{2} + t^{3(n-2)/2}\big), \label{E:integ2_alt}
\end{align}											
provided $t\in (0, \min\{f(a)^2, e^{-1/T}\})$, where $T>0$ is as given by
Lemma~\ref{L:diff_of_squares}. The justification of the last inequality is,
essentially, the argument leading to the estimate \eqref{E:modulo_middle} above.
\smallskip

Since $f$ vanishes to infinite order at $0$, we can argue exactly
as in the previous proof to obtain a constant $c>0$ so that
$R_t^2\geq t^{1-(n/2)}$ whenever $t\in (0, c)$ (recall: $n\in \{0,1\}$).
Set $r_0:=\min\{f(a)^2, e^{-1/T}, c\}$.
Then, from the last inequality, \eqref{E:f-integ_crude} and 
\eqref{E:integ2_alt}, the estimate \eqref{E:key_norm_bd_alt} follows.
\end{proof}

\section{The proof of Theorem~\ref{T:kernel}}\label{S:proof_kernel}
The proof of one half of part~$(a)$ is, essentially, the proof of
\cite[Lemma~3.1]{bharali:gBknitp11}. We reproduce it with the aim of providing, for clarity,
a few details that were tacit in \cite{bharali:gBknitp11}. Suppose there exist
$r_1<r_2$, $r_1,r_2\in [0,\infty)$, such that $f(r_1)\geq f(r_2)$. As $f$ is continuous,
$f|_{[0, r_2]}$ attains its maximum in $[0, r_2]$ but, owing to our assumption,
there exists a point $r^*\in [0, r_2)$ such that
\[
  f(r^*)\,=\,\max\nolimits_{r\in [0, r_2]}f(r).
\]
Then, as $F$ is a radial function,
\[
  F(r^*)\,\geq\,F(z) \; \; \forall z\in D(0, r_2).
\]
Since $F$ is subharmonic,
the Maximum Principle implies that $F|_{D(0,r_2)}\equiv 0$. But this means
that the portion $\bdy\OM_F$ in $D(0, r_2)\times D(0, r_2)$ is Levi-flat, which is a
contradiction. Hence $f$ is strictly increasing. In particular, $F$ is non-constant. Thus, by
Liouville's theorem for subharmonic functions, $F$ is unbounded. As $f$ is strictly increasing,
it follows that $\lim_{r\to \infty}f(r)=+\infty$.
\smallskip

Fix $\alpha>0$ and $N\in \zahl_+$. We shall first find a constant $r(\alpha, N)>0$ such that the
upper bound in \eqref{E:approach} holds on $\apprh\cap\{(z,w): \im{w}<r(\alpha, N)\}$.
By part~$(a)$, $f^{-1}$ is well-defined. Then, with $G_f$ as in Lemma~\ref{L:diff_of_squares}, we have
the expression
\begin{equation}\label{E:inverse}
  f^{-1}(t)\,=\,G_f\left(\frac{1}{\log(1/t)}\right), \quad 0<t<1
\end{equation}
(which we have tacitly used in the proof of Lemma~\ref{L:L_2_norm}). 
When $0<\rho\leq 1/2$, 
\[
  f^{-1}(t/2)\,=\,G_f\left(\frac{1}
  {\log{2}+\log(1/t)}\right)\,\geq\,G_f\left(\frac{1}{2\log(1/t)}\right) \;\; \forall t\in(0,\rho).
\]
Let $C'$ and $T$ be as given by Lemma~\ref{L:diff_of_squares}. By this lemma\,---\,shrinking
$\rho>0$ if necessary so that $1/\log(1/t)\in (0,T)$ whenever $t\in(0,\rho)$\,---\,we get
\begin{multline}\label{E:compare1}
  f^{-1}(t)-f^{-1}(t/2)\,\leq\,G_f(1/\log(t^{-1}))-
  G_f(1/2\log(t^{-1})) \\
  \leq\,C'\,G_f(1/2\log(t^{-1}))\,\leq\,C'\,f^{-1}(t/2) \; \;\forall t\in [0,\rho).
\end{multline}
Write $c:=(C'+1)^{-1}$. Since $f(x)$ vanishes to infinite order
at $x=0$, there exists a constant $r(\alpha, N)>0$ such that $r(\alpha, N)\leq \rho$ and
\begin{equation}\label{E:compare2}
  \alpha t^{1/N}\,<\,\frac{c}{2}\,f^{-1}(t) \; \; \forall t\in(0, r(\alpha, N)).
\end{equation}
From \eqref{E:compare1} and \eqref{E:compare2}, we see that
\[
  |z|+\frac{c}{2}\,f^{-1}(t)\,<\,f^{-1}(t/2) \; \; \forall z:0\leq|z|<\alpha t^{1/N}, \ 
  0<t<r(\alpha, N),
\]
whence the bidisc
\[
 \triangle(z,t)\,:=\,D\Big(z, \frac{c}{2}\,f^{-1}(t)\Big)\times D(it, t/2)\,\subset\,\OM_F \; \;
 \forall (z,it)\in \apprh\cap\{\im{w}<r(\alpha, N)\}.
\]

Observe that the translations $T_s:(z,w)\longmapsto (z,s+w)$, $s\in\R$, are all
automorphisms of $\OM_F$. Thus, by the transformation rule for the Bergman
kernel, and by monotonicity, we get
\begin{align}
  K_F(z,s+it)\,&=\,K_F(z,it)  \notag \\
  			&\leq\,K_{\triangle(z,t)}(z,it)\,=\,\frac{1}{{\rm vol}\big(\triangle(z,t)\big)}
				\; \; \forall (z,s+it)\in \apprh\cap\{\im{w}<r(\alpha, N)\}. \label{E:Berg_poly}
\end{align}
The last equality follows from the fact that $\triangle(z,t)$ is a Reinhardt domain
centered at $(z,it)$. Hence, we have found a $C_1>0$, which is independent of the choice
of $\alpha$ and $N$, such that  
\begin{equation}\label{E:1stHalf}
  K_F(z,w)\,\leq\,C_1(\im{w})^{-2}\big(f^{-1}(\im{w})\big)^{-2} \; \;
  \forall (z,w)\in\apprh\cap\{\im{w}<r(\alpha, N)\}
\end{equation}
(here $C_1=16/c^2\pi^2$), which establishes one portion of part~$(b)$. 
\smallskip

We shall now deduce the desired lower bound. Set $a:=\min\{f^{-1}(1),1\}$. In the remainder
of this proof, $\Delta$ will denote the bidisc $D(0, a)\times\D$. Once again, we draw upon
the fact that the translations $T_s:(z,w)\longmapsto (z,s+w)$, $s\in\R$, are
automorphisms of $\OM_F$, whence:
\begin{align}
  K_F(z,s+it)\,&=\,K_F(z,it) \; \; \forall (z,s+it)\in \OM_F \label{E:trans_inv}\\
  				&\geq\,\delta K_{\loc}(z,it) \; \; 
				\forall (z,it)\in \OM_F\cap\big(\tfrac{1}{2}\Delta\big).
				\label{E:loc-to-Delta}
\end{align}
The second inequality is a consequence of Result~\ref{R:key} applied to $\OM_F$,
taking $U=\Delta$ and $V=\frac{1}{2}\Delta$. Part~$(a)$ of the present theorem enables
the use of Result~\ref{R:key}.
\smallskip

Now, consider the functions
\[
  \phi_t(\zt,w)\,:=\,-4t^2/(w+it)^2 \; \; \forall (\zt,w)\in \loc,
\]
where $t>0$. In the notation of Lemma~\ref{L:L_2_norm}, $\phi_t = -4\psi(\bcdot\,;0,2,0,t,1)$.
Let $r_0>0$ be as given by Lemma~\ref{L:L_2_norm}. By construction, $\phi_t(\zt,it)=1$
$\forall t>0$. Thus, by the Bergman--Fuchs identity \eqref{E:B-F_kern} and
the estimate \eqref{E:key_norm_bd} applied to $\phi_t$\,(\,$=-4\psi(\bcdot\,;0,2,0,t,1)$, as
explained), we have
\begin{equation}\label{E:low_bound_provisional}
  K_{\loc}(z,it)\,\geq\,(C^*)^{-1}t^{-2}\big(f^{-1}(t)\big)^{-2} \; \;
  \forall (z,it)\in \loc \text{ and } t<r_0.
\end{equation}
Lowering the value of $r_0$, if necessary, we may assume that
$\OM_F\cap\{(z,w): \im{w}<r_0\}\subseteq \OM_F\cap\big(\tfrac{1}{2}\Delta\big)$.
Then, from \eqref{E:trans_inv}, \eqref{E:loc-to-Delta} and \eqref{E:low_bound_provisional},
we get
\begin{equation}\label{E:low_bound_final}
  K_F(z,w)\,\geq\,\delta(C^*)^{-1}(\im w)^{-2}\big(f^{-1}(\im w)\big)^{-2} \; \;
  \forall (z,w)\in \OM_F\cap\{(z,w): \im{w}<r_0\}.
\end{equation} 
This establishes part~$(c)$ of our theorem. We may assume that each $r(\alpha, N)\leq r_0$
without affecting the inequality \eqref{E:approach}.
Now, consider the constant $C_1$ introduced in \eqref{E:1stHalf}: raising
the value of $C_1$, if necessary, we obtain a $C_1>0$ such that
part~$(b)$ of our theorem follows from the last observation, \eqref{E:1stHalf}
and \eqref{E:low_bound_final}. \hfill $\Box$
\smallskip

\section{The proof of Theorem~\ref{T:metric}}\label{S:proof_metric}
Part~$(a)$ of Theorem~\ref{T:kernel} is relevant to this proof as well. It establishes that $f$ is
invertible. Also relevant is the argument in the second paragraph of the proof of
Theorem~\ref{T:kernel}. The conclusion of this argument is summarised by the following:
\medskip

\noindent{\textsc{Fact.} \emph{There exists a constant $c>0$ and, for each $\alpha>0$ and
$N\in \zahl_+$, there exists a constant $r(\alpha, N)>0$ such that whenever
$(z,it)\in \apprh\cap\{(z,w): \im{w}<r(\alpha, N)\}$,
the bidisc
\begin{equation}\label{E:polyD_incl}
  \triangle(z,t)\,:=\,D\Big(z, \frac{c}{2}\,f^{-1}(t)\Big)\times D(it, t/2)\,\subset\,\OM_F.
\end{equation}}}

By an argument analogous to the one in the proof of Theorem~\ref{T:kernel}\,---\,involving
the fact that $T_s:(z,w)\longmapsto (z,s+w)$ is an automorphism of $\OM_F$ for any
$s\in \R$\,---\,we have
\begin{equation}\label{E:shift_inv}
  \met{F}(z,s+it;\,\xi)\,=\,\met{F}(z,it;\,\xi) \; \; \forall (z,s+it;\,\xi)\in \OM_F\times \CC.
\end{equation}
By \eqref{E:polyD_incl}, and by the monotonicity property of the functional $\jay_{\OM}$
given by \eqref{E:B-F_aux}, we have:
\[
  \jay_{\triangle(z,t)}(z,it;\,\xi)\,\leq\,\jay_{\OM_F}(z,it;\,\xi) \; \; 
  \forall (z,it;\,\xi)\in \big(\apprh\cap\{(z,w): \im{w}<r(\alpha, N)\}\big)\times(\CC\!\setminus\!\{0\}).
\]
We know that $K_{\triangle(z,t)}(z,it) = 16(\pi c)^{-2}\big(f^{-1}(t)\big)^{-2}t^{-2}$;
see \eqref{E:Berg_poly}. It is a standard result (or one may compute from the last formula)
that
\[
 \met{\triangle(z,it)}(z,it;\,\xi)\,=\,8\big(c^{-2}
						 \big(f^{-1}(t)\big)^{-2}|\xi_1|^2 + t^{-2}|\xi_2|^2\big) \; \; \forall \xi\in \CC.
\]
From these formulas and \eqref{E:B-F_met}, we get an \emph{exact} expression for
$\jay_{\triangle(z,t)}(z,it;\,\xi)$. We combine this with monotonicity of $\jay_{\OM}$: then,
\eqref{E:shift_inv}, the Bergman--Fuchs formula for $\met{F}$,
and the lower bound in \eqref{E:approach} imply that there exists a constant
$C_2 > 0$ (independent of $\alpha$ and $N$) such that:  
\begin{align*}
  \met{F}(z,s+it;\,\xi)\,&\leq\,1/\big(\jay_{\triangle(z,t)}(z,it;\,\xi)\,K_F(z, it)\big) \\
  						 &=\,16(\pi c)^{-2}\big(f^{-1}(t)\big)^{-2}t^{-2}\big(8c^{-2}
						 \big(f^{-1}(t)\big)^{-2}|\xi_1|^2 + 8t^{-2}|\xi_2|^2\big)\,K_F(z,it)^{-1} \\
						 &\leq\,C_2\big(\big(f^{-1}(t)\big)^{-2}|\xi_1|^2 + |t|^{-2}|\xi_2|^2\big) \\
						 &\qquad\qquad
						 	\forall (z,s+it;\,\xi)\in 
							\big(\apprh\cap\{(z,w): \im{w}<r(\alpha, N)\}\big)\times(\CC\!\setminus\!\{0\}).
\end{align*}
Hence, we have found a $C_2>0$, which is independent of the choice
of $\alpha$ and $N$, such that
\begin{align}
  \met{F}(z,w;\,\xi)
  &\leq\,C_2\big(\big(f^{-1}(\im w)\big)^{-2}|\xi_1|^2
  + |\im w|^{-2}|\xi_2|^2\big) \notag \\
  &\qquad\qquad
  \forall (z,w;\,\xi) \in \big(\apprh\cap\{(z,w): \im{w}<r(\alpha, N)\}\big)\times\CC, \label{E:1stHalf_met}
\end{align}
which establishes one half of the estimate \eqref{E:approach_met}.
\smallskip

We shall now deduce the desired lower bound. As in the proof of Theorem~\ref{T:kernel},
we set $a:=\min\{f^{-1}(1),1\}$ and $\Delta:=D(0, a)\times\D$. Also, for reasons analogous
to those in the proof of Theorem~\ref{T:kernel} (or in the previous paragraph), we have:
\begin{align}
  \met{F}(z,s+it;\,\xi)\,&=\,\met{F}(z,it;\,\xi) \; \; \forall (z,s+it;\,\xi)\in \OM_F\times\CC
  																		\label{E:trans_inv_met} \\
						&\geq\,\delta \met{\loc}(z,it;\,\xi) \; \; 
				\forall (z,it;\,\xi)\in \big(\OM_F\cap\big(\tfrac{1}{2}\Delta\big)\big)\times\CC.
				\label{E:loc-to-Delta_met}
\end{align}
The second inequality follows from Result~\ref{R:key} applied to $\OM_F$,
taking $U=\Delta$ and $V=\frac{1}{2}\Delta$\,---\,the applicability of this result
being, as before, due to Theorem~\ref{T:kernel}-$(a)$.
\smallskip

Now, \textbf{fix} a point $(z,it)\in \loc$, and let $\xi\in \CC\setminus\{(0,0)\}$. In view
of \eqref{E:loc-to-Delta_met}, we need to find a lower bound for $\met{\loc}(z,it;\,\xi)$. 
This quest for a lower bound splits into two cases. In the argument below, $B>0$ will
denote a constant
whose value is independent of the variables involved, whose actual value is not of interest, and
which may change from line to line.\smallskip

\noindent{\textbf{Case~1.} \emph{$\xi\in \CC\setminus\{(0,0)\}$ such that $\xi_2\neq 0$.}}
\vspace{0.3mm}

\noindent{Consider the function
\[
  \func(\zt,w)\,:=\,-\frac{8it^3(w-it)}{\xi_2(w+it)^3} \; \; 
  \forall (\zt,w)\in \loc.
\]
It is easy to check that $\func$ belongs to the set occurring on the right-hand
side of the equation that defines 
$\jay_{\loc}(z,it;\,\xi)$. In terms
of the notation of Lemma~\ref{L:L_2_norm_alt},
\begin{equation}\label{E:func_psi}
  |\func|^2\,\leq\,\frac{128}{|\xi_2|^2}\,|\phi(\bcdot\,;1,t)|^2 + \frac{128t^2}{|\xi_2|^2}\,|\phi(\bcdot\,;0,t)|^2.
\end{equation}
Let $r_0>0$ be the constant given by Lemma~\ref{L:L_2_norm_alt}. Let us now consider
$(z,it)\in \loc$ such that $0<t<r_0$. Then, in view of \eqref{E:func_psi},
Lemma~\ref{L:L_2_norm} gives us
\[
  \|\func\|^2_{\leb{2}(\loc)}\,\leq\,\frac{B}{|\xi_2|^2}t^4(f^{-1}(t)\big)^2.
\]
for some constant $B>0$.
Therefore, by the definition of $\jay_{\loc}(z,it;\,\xi)$
in \eqref{E:B-F_aux}, clearly
\begin{multline}\label{E:jay_1st-case}
  \jay_{\loc}(z,it;\,\xi)\,\leq\,\frac{B}{|\xi_2|^2}\,t^4\big(f^{-1}(t)\big)^2 \\
  \; \; \forall (z,it)\in \loc\cap\{(z,w):\im{w}<r_0\} \text{ and $\xi: \xi_2\neq 0$}.
\end{multline}
Let us define
\begin{equation}\label{E:tau}
  \tau(\alpha, N)\,:=\,\min\{r_0, r(\alpha, N)\}
\end{equation}
where $r(\alpha, N)$ is as given by the \textsc{Fact} stated at the beginning of this proof.
Now, the latter parameter is precisely the one provided by the proof of Theorem~\ref{T:kernel}
and which is introduced just before the estimate \eqref{E:approach}. Therefore, we have,
by \eqref{E:approach}:
\[
  \frac{1}{K_F(z,it)}\,\geq\,(1/C_1)t^2\big(f^{-1}(t)\big)^2 \; \;
  \forall (z,it)\in \apprh\cap\{(z,w): \im{w}<r(\alpha, N)\}.
\]
From the latter inequality, \eqref{E:jay_1st-case}, the Bergman--Fuchs identity
\eqref{E:B-F_met}, and \eqref{E:loc-to-Delta}, we get
\begin{multline}\label{E:met_1st-case}
  \met{\loc}(z,it;\,\xi)\,\geq\,\frac{\delta|\xi_2|^2}{B\bcdot C_1}t^{-2} \\
  \; \; \forall (z,it)\in \apprh\cap\big(\tfrac{1}{2}\Delta\big)\cap\{(z,w):\im{w}<\tau(\alpha, N)\} 
  \text{ and $\xi: \xi_2\neq 0$}. 
\end{multline}}
\vspace{0.1mm}

\noindent{\textbf{Case~2.} \emph{$\xi\in \CC\setminus\{(0,0)\}$ such that $\xi_1\neq 0$.}}
\vspace{0.3mm}

\noindent{Consider the function
\[
  \funk(\zt,w)\,:=\,-\frac{4(\zt-z)t^2}{\xi_1(w+it)^2} \; \; 
  \forall (\zt,w)\in \loc.
\]
It is easy to verify that $\funk$ belongs to the set occurring on the right-hand
side of the equation that defines 
$\jay_{\loc}(z,it;\,\xi)$. In this case, in terms
of the notation of Lemma~\ref{L:L_2_norm},
\begin{equation}\label{E:funk_psi}
  |\funk|^2\,\leq\,\frac{32}{|\xi_1|^2}\,|\psi(\bcdot\,;0,2,1,t,1)|^2
  + \frac{32}{|\xi_1|^2}\,|\psi(\bcdot\,;1,2,0,t,z)|^2.
\end{equation}
As before, let us first restrict
$(z,it)$ to $\loc$ such that $0<t<r_0$, where $r_0$ is as given by Lemma~\ref{L:L_2_norm}.
Given \eqref{E:funk_psi},
this lemma implies:
\[
  \|\funk\|^2_{\leb{2}(\loc)}\,\leq\,\frac{B}{|\xi_1|^2}\,t^2\big(f^{-1}(t)\big)^4,
\]
for some constant $B>0$. Therefore, by the definition of $\jay_{\loc}(z,it;\,\xi)$
in \eqref{E:B-F_aux},
\begin{multline}\label{E:jay_2nd-case}
  \jay_{\loc}(z,it;\,\xi)\,\leq\,\frac{B}{|\xi_1|^2}\,t^2\big(f^{-1}(t)\big)^4 \\
  \; \; \forall (z,it)\in \loc\cap\{(z,w):\im{w}<r_0\} \text{ and $\xi: \xi_1\neq 0$}.
\end{multline}
Defining $\tau(\alpha, N)$ exactly as in \eqref{E:tau} and arguing exactly as in
Case~1, we get
\begin{multline}\label{E:met_2nd-case}
  \met{\loc}(z,it;\,\xi)\,\geq\,\frac{\delta|\xi_1|^2}{B\bcdot C_1}\big(f^{-1}(t)\big)^{-2} \\
  \; \; \forall (z,it)\in \apprh\cap\big(\tfrac{1}{2}\Delta\big)\cap\{(z,w):\im{w}<\tau(\alpha, N)\}
  \text{ and $\xi: \xi_1\neq 0$}.
\end{multline}}

To complete the proof, we first note that for each relevant $(z,it)$,
\eqref{E:met_1st-case} and \eqref{E:met_2nd-case} give two different
lower bounds for $\met{\loc}(z,it;\,\bcdot)$ on the set
$\{\xi\in T^{1,0}_{(z,it)}(\OM_F\cap (\frac{1}{2}\Delta)): \xi_1\neq 0 \text{ and } \xi_2\neq 0\}$.
From this, it follows that
\begin{multline}\label{E:met_low-bound_provisional}
   \met{\loc}(z,it;\,\xi)\,\geq\,\frac{\delta}{2B\bcdot C_1}\big(\big(f^{-1}(t)\big)^{-2}|\xi_1|^2
   +t^{-2}|\xi_2|^2\big) \\
  \; \; \forall (z,it;\,\xi)\in 
  \big(\apprh\cap\big(\tfrac{1}{2}\Delta\big)\cap\{(z,w):\im{w}<\tau(\alpha, N)\}\big)\times \CC.
\end{multline}
Lowering the value of the constant $r_0$ that occurs in \eqref{E:tau}, if necessary, we
may assume that
\[
  \OM_F\cap\{(z,w): \im{w}<r_0\}\,\subseteq\,\OM_F\cap\big(\tfrac{1}{2}\Delta\big).
\]
Then, from \eqref{E:trans_inv_met}, \eqref{E:loc-to-Delta_met} and \eqref{E:met_low-bound_provisional},
we get
\begin{align}
  \met{F}(z,w;\,\xi)
    &\geq\,\frac{\delta^2}{2B\bcdot C_1}\big(\big(f^{-1}(\im w)\big)^{-2}|\xi_1|^2
    + |\im w|^{-2}|\xi_2|^2\big)  \notag \\
    &\qquad\qquad \forall (z,w;\,\xi)\in \big(\apprh\cap\{(z,w): \im{w}<\tau(\alpha, N)\}\big)\times\CC.
    \label{E:low_bound_met}
\end{align} 
This establishes the other half of the estimate
\eqref{E:approach_met}. Raising the value of the constant $C_2>0$ introduced just prior
to \eqref{E:1stHalf_met}, if necessary, \eqref{E:approach_met} now follows
from \eqref{E:1stHalf_met}
and \eqref{E:low_bound_met}. \hfill $\Box$ 
\smallskip

\section*{Acknowledgements}\vspace{-0.1cm}
I thank the referees of this work for helpful suggestions on exposition, and for pointing
out a simpler expression for one of the conditions needed above. I also thank the referee
who drew my attention to a gap in a proof in an earlier version of this work.
This work is supported by a Swarnajayanti Fellowship (Grant No.~DST/SJF/MSA-02/2013-14)
and by a UGC CAS-II grant (Grant No. F.510/25/CAS-II/2018(SAP-I)).
\smallskip

\end{document}